\newtheoremstyle%
 {greenthm}%
 {}{}%
 {\color{myblue}}
 {}%
 {\color{red}\bfseries}%
{\color{red}.}%
 { }{}
\newtheorem{theorem}{Theorem}
\newtheorem{lemma}[theorem]{Lemma}
\newtheorem{definition}[theorem]{Definition}
\newtheorem{result}[theorem]{Result}
\theoremstyle{remark}
\newtheorem{remark}[theorem]{Remark}
\theoremstyle{greenthm}
\newcommand{\supp}[1]{{\text{supp}}({#1})}
\newcommand{\dist}{\mathrm{dist}}
\renewcommand{\S}{\mathcal{S}}
\newcommand{\delt}{\varepsilon}
\newcommand{\dif}{\, \mathrm{d}}
\newcommand{\Rr}{\mathbb{R}}
\newcommand{\Nn}{\mathbb{N}}
\newcommand{\Zz}{\mathbb{Z}}
\newcommand{\Dgauss}{\mathcal{D}}
\newcommand{\Dcurv}{\mathcal{D}_{curv}}
\newcommand{\T}{\mathcal{T}}
\newcommand{\grid}{\mathbb{X}}
\renewcommand{\L}{\Lambda}
\newcommand{\curvenorm}[1]{[#1 ]_{\alpha}}
\newcommand{\cost}{\mathrm{c}}
\newcommand{\gam}{\bm{\gamma}}
\newcommand{\G}{\bm{G}}
\newcommand{\EE}{\bm{E}}
\newcommand{\Ps}{\bm{\Psi}}
\newcommand{\Derr}{\mathcal{D}_{\Ps}}
\newcommand{\fiber}{\mathcal{K}}
\newcommand{\absfiber}{\tilde{\mathcal{K}}}
\newcommand{\ful}[1]{T_h^{\sharp} #1}
\newcommand{\notful}[1]{T_{h}^{\flat} #1}
\renewcommand{\H}{\mathcal{H}}
\newcommand{\B}{\mathcal{B}}
\newcommand{\M}{\Gamma}
\newcommand{\LLL}{3}
\newcommand{\Manoa}{M\=anoa}
\newcommand{\Hawaii}{Hawai\kern.05em`\kern.05em\relax i }
\begin{document}

\title{Anisotropic Gaussian approximation in $L_2(\Rr^2)$}
\author{ W.~Erb 
\thanks{Dipartimento Matematica ``Tullio Levi-Civita'', Universit{\`a} degli Studi di Padova, 
Via Trieste 63, 35121 Padova, Italy},
 T.~Hangelbroek
\thanks{Department of Mathematics, University of \Hawaii   -- \Manoa,
 Honolulu, HI 96822, USA.   
Research supported by   grant DMS-1716927 from the National
    Science Foundation.},
    A.~Ron\thanks{Department of Computer Sciences, University of Wisconsin, Madison, USA.
    Research supported by grant DMS-1419103 from the National
    Science Foundation.}} 
\maketitle
%
%
\begin{abstract}
Let $\Dgauss$ be the {\it dictionary of Gaussian mixtures},
i.e., of the functions created by all possible linear
change of variables, and all possible translations, 
of a single Gaussian in multi-dimension. The dictionary $\Dgauss$ is vastly
used in engineering and scientific applications to a degree that practitioners
often use it as their default choice for representing their  scientific
object. The pervasive use of $\Dgauss$ in applications hinges on the
scientific perception that this dictionary is {\it universal} in 
the sense that it is large enough, and its members are local enough in 
space as well as in frequency, to provide efficient approximation 
to ``almost all objects of interest". 
However, and perhaps surprisingly, only a handful of concrete theoretical
results are actually known on the ability to use Gaussian mixtures in lieu
of mainstream representation systems.
Previous work, by Kyriazis and Petrushev, \cite{KP}, and by two of us,
\cite{HR}, settled the isotropic case, 
when smoothness is defined via a Sobolev/Besov/Triebel-Lizorkin topology:
it is shown in these papers that the non-linear $N$-term approximation 
from the dictionary $\Dgauss$ (and as a matter of fact from a small subset of it) is as effective 
as the counterpart provided by wavelets. 

The present paper shows that, in 2D, Gaussian mixtures 
are effective in resolving anisotropic structures, too. 
In this setup, the ``smoothness class" is comprised of 2D functions that are 
sparsely represented using {\it curvelets}. 
An algorithm for $N$-term approximation from (again, a small subset of) 
$\Dgauss$ is presented, and the error bounds are then shown to be on par
with the errors of $N$-term curvelet approximation. 
The latter are optimal, essentially by definition. 
Our approach is based on providing effective approximation 
from $\Dgauss$ to the members of the curvelet system, 
mimicking the approach in \cite{DR,HR} where the mother wavelets are 
approximated. When the error is measured in the $1$-norm, 
this adaptation of the prior approach, combined with standard tools,
yields the desired results.
However, handling the $2$-norm case  is much more subtle and
 requires substantial new machinery: in  this case, the error
analysis cannot be solely done on the space domain: some of it has to be 
carried out on frequency. Since, on frequency, all members of $\Dgauss$ are centered at the origin,  
a delicate analysis for controlling the error there is 
needed.
\end{abstract}
%
%

\section{Introduction}
%
%
\subsection{The universal Gaussian mixture dictionary $\Dgauss$}
Given the $d$-dimension Gaussian $\phi:x\mapsto e^{-|x|^2}$,
we are interested in $N$-term nonlinear approximation from the {\em Gaussian mixture dictionary}
$$ \Dgauss :=  \{ \phi \circ M \mid M \in  GL(d,\Rr)\ltimes \Rr^d \};$$
that is, a typical element of the dictionary is of the form
$$x\mapsto e^{-|A(x - k)|^2},$$
where $A$ is an invertible $d\times d$ matrix and $k\in \Rr^d$.
The dictionary $\Dgauss$ is vast, and its members are superbly local in space as well as in frequency.
It seems plausible, then, that this dictionary is {\it universal} in the sense that it provides 
efficient approximation to all ``objects of interest''.
In particular, one should  expect the dictionary $\Dgauss$ to provide effective 
$N$-term approximation to functions that are 
smooth in the sense that they are sparsely encoded by one of the mainstream representation systems.
That said, and quite surprisingly, while Gaussian mixtures
are broadly used in engineering and scientific applications 
(including, e.g., the thousands of papers that cite either of \cite{Bilmes98}, \cite{Rasmussen}, or \cite{Xu}),
only a handful of concrete theoretical results  are known about
the efficacy of the dictionary $\Dgauss$  for $N$-term approximation. 

While, as we have just alluded to above, certain theoretical aspects of Gaussian
approximation are lacking, approximation by Gaussians {\it is} 
a common tool within Applied Mathematics.  For example, it is
used in scattered data approximation \cite{Wend,Mazya, Niyogi, Kuhn},
in PDEs (e.g., in computational chemistry \cite{GH,Carrington} and as
Gaussian beams for time dependent problems), in other areas of
numerical analysis 
(as shape functions for particle methods   \cite{huerta, WKLiu}
and as a component of the  non-equispaced fast Fourier transform 
\cite{Greengard}) and in statistics (specifically in
machine learning and, more specifically, 
in support vector machines \cite{Chapelle,StSc,YiZh}).  

Motivated by the fact that the theoretical underpinning
of the use of Gaussians in {\it anisotropic setups}
is rudimentary,  we focus in this paper on the
utilization of $N$-term Gaussian mixtures in anisotropic setups.
While we are unaware of existing results in this direction, there are 
related successful efforts for using Gaussians in anisotropic setups. Those,
however, go beyond the mixture dictionary $\Dgauss$,
by allowing the mixtures to be  {\it modulated}\footnote{In other words, 
the functions in the dictionary are of the form
$\{(e^{i\langle k, \cdot\rangle} \phi)\circ M\mid k\in \Rr^d, M\in GL(d,\Rr)\ltimes \Rr^d\}$.}: 
modulated, anisotropically scaled, rotated and translated  Gaussians
have been considered in \cite{Andersson,dHGR}. Other publications
that address Gaussians in anisotropic
setups include \cite{GH}, where
anisotropic Gaussian approximation  
(without rotations) has been used in very high dimensions 
(with the goal of obtaining dimension-independent convergence rates);
\cite{Dinh}, where
surface reconstruction based on
a mix of isotropic and anisotropically scaled RBFs is  considered;
\cite{Aiolli}, where 
the optimal selection of a single anisotropic dilation parameter 
in  Gaussians  SVM learning is  studied, and others.
In addition, anisotropic Gaussians have been used for 
numerical simulations in geosciences \cite{cheng}.

Less has been done on the utilization of Gaussians
for characterizing smoothness in the ensuing $N$-term approximation schemes.
An early result is Meyer's treatment \cite[Chapter 6.6]{M} of
 the bump algebra $B_{1,1}^d(\Rr^d)$ (consisting of infinite series of Gaussians)
with wavelets.  Niyogi and Girosi in \cite{Niyogi} give 
rates for $N$-term approximation by isotropic Gaussian for functions in a nearby space 
(similar to $B_{1,1}^d(\Rr^d)$).
The general {\it isotropic} setup in $d$-dimensions,
when smoothness is defined via a Sobolev/Besov/Triebel-Lizorkin topology
was treated by Kyriazis and Petrushev, \cite{KP}, 
and by two of us, \cite{HR}.
It is shown in these papers that the non-linear $N$-term approximation from the dictionary $\Dgauss$ 
(and as a matter of fact from a small subset of it) is as effective as the counterpart provided by wavelets.

%
%
\subsection{Treating anisotropic setups: our main results}

We study in this paper the utility of the ``universal Gaussian mixture
dictionary'' $\Dgauss$ in the representation of anisotropic 2D objects.
Our ``smoothness classes'' are defined as 
sets of functions
that are sparsely represented by {\em curvelets}.
However,
our results easily extend to smoothness classes defined via alternative anisotropic systems 
(e.g., shearlets, or other parabolic molecules):
such extension is readily possible using the machinery and the 
results that were developed and established by Grohs and Kutyniok \cite{GK}.

It should be stressed that we do not develop a new, Gaussian-based, representation system in this paper. 
Rather, we describe directly an algorithm for $N$-term approximation using 
members of the Gaussian dictionary $\Dgauss$.
In this sense, our results here are essentially different from those that have been established recently 
by
de Hoop, Gr{\"o}chenig and Romero  in \cite{dHGR}.
Indeed, while \cite{dHGR} resolves similar classes of anisotropic objects, its setup differs from 
ours in two substantial aspects. First, the Gaussians in \cite{dHGR} are modulated, and therefore 
the representation in \cite{dHGR} employs functions that are  outside our ``universal" $\Dgauss$.
Second, the focus in \cite{dHGR}  is not on $N$-term approximation
but on  the construction of a representation system (a frame),
with the $N$-term approximation schemes derived from the frame construction via standard arguments.

We describe now our setup and approach, and state our main result.
At this introductory level, we assume the reader to have a passing familiarity with the curvelet system 
developed by Cand{\`e}s and Donoho \cite{CD}. In the interest of being self-contained, we have included
the relevant technical details of the 
curvelet construction later on  in the body of our paper.

Given $\alpha>1/2$, our 
curvelet-based smoothness space is comprised of functions that 
are defined as follows.
Given $f\in L_2(\Rr^2)$, we expand $f = \sum_{n=1}^{\infty} \omega_n \gamma_n$ in a  decreasing rearrangement 
curvelet series, i.e., 
$(\gamma_n)_{n\in \Nn}$ are members of the curvelet system, 
and $|\omega_n|\ge |\omega_{n+1}|$
for all $n \in \Nn$. Then our smoothness class $\mathcal{C}_\alpha$ is defined
by
$$f\in\mathcal{C}_\alpha \iff  |\omega_n| = \mathcal{O}(n^{-\alpha}).$$
The assertion in the theorem below deals with the efficient 
resolution of $f\in \mathcal{C}_{\alpha}$ by
Gaussian mixtures.
Our techniques and results, however, extend to other, related, smoothness classes.
For example, we discuss (below, after the theorem)
the {\em cartoon class} of Donoho and Cand{\`e}s. 
This class, to recall, consists of functions of the form 
$F = f_1+f_2\chi_{\Omega}$, with $f_1$ and $f_2$ both functions in $C^2$
and $\Omega$ is a compact set with $C^2$ boundary.
This class enjoys a slightly different 
decay property of the curvelet coefficients; 
namely, $|\omega_n| = \mathcal{O}(n^{-3/2} |\log n|^{3/2})$,
(see \cite[Theorem 1.2]{CD}).\footnote{Note that
the cartoon class is contained in $\mathcal{C}_{\alpha}$ for $\alpha<3/2$.} We will come back to this 
cartoon class after stating our main result.

%
\begin{theorem}\label{main_intro}
For $\alpha>1/2$, $f \in \mathcal{C}_{\alpha}$ and any  $N\in \Nn$, there 
exists a Gaussian mixture with $N$ components, $\T_N f= \sum_{j=1}^N a_j G_j$ 
(with $G_j \in \Dgauss$ for each $j$),
such that
\[\|\T_N f - f\|_2 \leq C_f {N}^{\frac12 -\alpha}.\]
The constant $C_f$ depends on $f$ (and on $\alpha$), but is independent of $N$.
\end{theorem}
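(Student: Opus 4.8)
The plan is to reduce the theorem to a \emph{per-curvelet} Gaussian approximation and then to assemble the pieces in $L_2(\Rr^2)$ by means of almost-orthogonality rather than the triangle inequality. I would begin from the decreasing-rearrangement expansion $f=\sum_{n=1}^{\infty}\omega_n\gamma_n$ and fix a truncation index $M$, to be optimized at the end. Because the curvelet system is a (tight) frame \cite{CD}, the upper frame bound together with $|\omega_n|=\mathcal{O}(n^{-\alpha})$ and the hypothesis $\alpha>1/2$ yields
\[
\Big\|\, f-\sum_{n=1}^{M}\omega_n\gamma_n \,\Big\|_2
\;\lesssim\;\Big(\sum_{n>M}|\omega_n|^2\Big)^{1/2}
\;\lesssim\; M^{\frac12-\alpha},
\]
which is already of the desired order. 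The task is therefore to replace each retained $\gamma_n$ by an honest element of $\spam\Dgauss$ without degrading this rate.

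For the replacement I would invoke the paper's core construction (the analogue, for curvelets, of the mother-wavelet approximation of \cite{DR,HR}): every curvelet $\gamma_n$ admits an approximant $G_n\in\spam\Dgauss$ built from $m$ Gaussians with $\|\gamma_n-G_n\|_2\le\eta(m)$, where $\eta$ decays rapidly in $m$ and the bound is \emph{uniform} in the scale, orientation and location of $\gamma_n$. Uniformity is exactly what makes this step usable: since $\Dgauss$ is invariant under the full affine group, it is enough to approximate the finitely many generating curvelet profiles and then transport the approximation by the same change of variables that produces $\gamma_n$. Writing $\T_N f:=\sum_{n=1}^{M}\omega_n G_n$ reduces the proof to controlling the replacement error $\big\|\sum_{n=1}^{M}\omega_n(\gamma_n-G_n)\big\|_2$.

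This last estimate is the crux and the step I expect to be the main obstacle. Setting $e_n:=\gamma_n-G_n$, the naive triangle inequality only gives $\sum_{n\le M}|\omega_n|\,\eta(m)$, whose factor $\sum_{n\le M} n^{-\alpha}$ is far too large when $\alpha$ is close to $1/2$. What is required instead is an almost-orthogonality (Schur / Cotlar--Stein) bound for the family $\{e_n\}$, namely $|\langle e_n,e_{n'}\rangle|\le \|e_n\|_2\|e_{n'}\|_2\,\rho(n,n')$ with $\rho$ passing the Schur test, which upgrades the estimate to
\[
\Big\|\sum_{n=1}^{M}\omega_n e_n\Big\|_2^2
\;\lesssim\;\sum_{n=1}^{M}|\omega_n|^2\,\|e_n\|_2^2 .
\]
Here lies precisely the difficulty flagged in the abstract: on the Fourier side every $\hat G_n$ is centered at the origin, so the low-frequency tails of all the $\hat e_n$ pile up near $\xi=0$ and threaten to interact constructively, even though the curvelets $\hat\gamma_n$ themselves occupy well-separated high-frequency wedges. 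I would therefore prove the off-diagonal decay of $\rho$ by working on frequency, establishing a quantitative localization of $\hat e_n$ inside (a neighborhood of) the curvelet's wedge together with fast decay toward the origin; curvelets at disparate scales or angles then have summably small error-correlations, while the comparatively mild space-domain overlaps at nearby scales and locations are controlled by standard molecule/Schur estimates. This frequency-domain control of the Gaussian approximation error is the genuinely new machinery; in the $1$-norm the same decomposition succeeds already with the triangle inequality, which is why that case is routine.

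Finally I would choose the number of Gaussians adaptively rather than uniformly, assigning more components to the curvelets carrying the largest coefficients. Balancing the per-curvelet contributions $|\omega_n|\,\eta(m_n)$ leads to an allocation with $m_n\asymp\log(M/n)$ for $n\le M$, whose logarithmic overhead sums, by a Stirling/integral estimate, to a total $N=\sum_{n\le M} m_n\asymp M$; this linear total count is exactly what prevents any logarithmic loss. With $\|e_n\|_2=\eta(m_n)$ balanced against $|\omega_n|$, the right-hand side of the displayed almost-orthogonality bound is $\mathcal{O}(M^{1-2\alpha})$, so the replacement error is $\mathcal{O}(M^{\frac12-\alpha})$, matching the tail. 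Since $N\asymp M$, both contributions are $\mathcal{O}(N^{\frac12-\alpha})$ with a constant depending only on $f$ and $\alpha$, which is the asserted bound $\|\T_N f-f\|_2\le C_f N^{\frac12-\alpha}$.
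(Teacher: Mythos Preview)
Your outline is sound and very close to the paper's strategy: truncate the decreasing rearrangement at $m^*\asymp N$, replace each retained curvelet by a Gaussian mixture using the affine invariance of $\Dgauss$, and control the replacement error by an almost-orthogonality (Bessel) estimate for the individual errors on the Fourier side rather than by the triangle inequality. You have also correctly located the real difficulty (the low-frequency pile-up of the Gaussian side). Two technical choices, however, differ from what the paper does.

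First, the paper does not attempt a single Schur/Cotlar--Stein estimate across all $e_n$ at once. Instead it takes a power-law budget $N(n)\asymp (N/n)^{\beta}$ for a fixed $0<\beta<1$ and, crucially, holds it \emph{constant on dyadic blocks} $\M_\nu=(2^{\nu-1},2^{\nu}]$. Within $\M_\nu$ every error uses the same budget $M_\nu$, so the rescaled family $\{M_\nu^{K}E_{\gamma,M_\nu}\}_{\gamma\in\Dcurv}$ is a genuine generalized shift-invariant system whose generators depend only on the curvelet scale $j$; Lemma~\ref{lemma-2} then gives a Bessel bound with constant \emph{independent of $M_\nu$} via the dual-Gramian machinery. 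One sums the $O(\log N)$ block contributions by the ordinary triangle inequality, and the polynomial error decay $M_\nu^{-K}$ swallows the $\log N$ factor. Your route---a single Schur estimate for all $e_n$ with varying budgets $m_n$---is in principle viable (after normalizing by the upper bound $m_n^{-K}$ the Fourier profiles are budget-independent), but the error functions at a fixed scale then carry different generators and no longer fit the GSI framework directly; the paper's block structure is precisely the device that sidesteps this.

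Second, and more minor: the allocation $m_n\asymp\log(M/n)$ presupposes exponential decay of $\eta$ in $m$. Lemma~\ref{lemma-1} is stated as $\eta(M)\le C_K M^{-K}$ for every $K$, which is super-polynomial but, as written, not exponential; with $m_n$ logarithmic this yields only $\eta(m_n)\lesssim(\log(M/n))^{-K}$, which does not balance as you claim. The paper's power-law allocation matches the $M^{-K}$ bound directly. If you wish to keep a log-type allocation you would need either the sharper underlying estimate $\eta(M)\lesssim e^{-cM^{1/2}}$ from the semi-discrete convolution, or a slightly larger choice such as $m_n\asymp(\log(M/n))^{2}$, which still sums to $O(M)$.
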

%
This result should be compared with  
$N$-term approximation by  the curvelets themselves. 
For $f=\sum_{n=1}^{\infty} \omega_n \gamma_n\in \mathcal{C}_{\alpha}$ as above,
the $N$-term approximant that is obtained by simple thresholding, viz.,
$\sum_{n=1}^{N} \omega_n \gamma_n$, has its error bounded by a constant
multiple of
$\left(\sum_{n=N+1}^{\infty} \omega_n^2\right)^{1/2}\le C N^{\frac12 -\alpha}$. 

An analogous result for the cartoon class
can then be easily attained. In that case, the curvelet $N$-term
approximation is of error 
$\mathcal{O}(N^{-1} (\log N)^{1.5})$, \cite[Theorem 1.3]{CD}. 
Our result for an $N$-term Gaussian mixture is 
$\mathcal{O}(N^{-1} (\log N)^{1.5})$ as well
(see our remark in section \ref{sec=cartoon}).

%
%
\subsection{Our Algorithm}
\def\m{m}

Our approximation scheme consists of two main components: 
a high order  Gaussian approximant for individual curvelets, 
and a budgeting strategy which details the number of Gaussians
that are used in our approximation of  the individual curvelets
in $\Dgauss$.

{\bf Step I: Approximation of individual curvelets.} 
This is auxiliary, and is independent of $f$ and $\alpha$. 
In that step, we develop approximation schemes by Gaussian mixtures 
for the elements $\gamma$ of the curvelet system itself: given a 
curvelet $\gamma$ and a `budget' of $M$ Gaussians, 
we provide in the first step an 
approximant $G_{\gamma,M}$ for $\gamma$. The approximant is a Gaussian 
mixture with $M$ Gaussians, i.e., a linear combination of $M$ 
members from the dictionary $\Dgauss$. This step is facilitated by 
the following crucial fact: the curvelet system is obtained by 
applying certain unitary operations to a smaller number of prototypes. 
{\it The Gaussian mixture dictionary is invariant under these unitary
operations}. Thus, if $\gamma=U\gamma'$, where $\gamma$ is a curvelet, 
$\gamma'$ the corresponding curvelet prototype, and $U$ the unitary 
transformation, then we define
$$G_{\gamma,M}:=UG_{\gamma',M},$$
and reduce in this way  the  
problem to finding effective 
approximations for the prototypes themselves.

{\bf Step II: Budgeting.} 
We assemble the $N$-term approximation, $\T_N f$, for $f$, 
from suitable approximations, $G_{\gamma,M}$, to the underlying
curvelets. We first expand $f$, as before, in its decreasing rearrangement:
$$f=\sum_{n=1}^\infty \omega_n\gamma_n.$$
Then, once $N$ is given, our goal is to approximate $f$ by a mixture
$\mathcal{T}_Nf$ of $N$ Gaussians. 
To this end, we partition our budget $N$ of
Gaussians into
$$N=\sum_{n=1}^\infty N(n),$$
of non-negative sub-budgets. We note that the 
above decomposition of $N$ is {\it universal}: it does not depend on $f$ and
it does not depend on $\alpha$. 
Specifically, if $N(n)\not=0$, one of the possible budgeting algorithms in this
paper takes the form
$$N(n)= \left\lfloor \frac{\sqrt{2}}{2}\sqrt{\frac{N}{m_n}} \right\rfloor,$$
with $\m_n = 2^{\lceil\log_2(n)\rceil}$.
For example, if $N=256$, then the first few terms in the sequence $(N(n))_n$
are 
$$(11,8,5,5,4,4,4,4,\ldots).$$
For those values $N(n)$ which exceed a threshold $N_0$ 
(which is the minimum number of Gaussians we use for approximating any curvelet), 
we approximate
the curvelet $\gamma_n$ by a mixture $G_{\gamma_n,N(n)}$ 
using the method that is discussed in {\bf Step I}.

{\bf The approximation scheme:} Given a function $f\in \mathcal{C}_\alpha$,
and an integer $N$, we construct the
Gaussian mixture $\T_N f$ from Theorem \ref{main_intro} by employing
the two steps described above: 
$$\mathcal{T}_N f:=\sum_n \omega_n G_{\gamma_n,N(n)},$$
with the summation being over all the terms that {\it are} approximated. 
Since the sub-budgets $(N(n))_n$ are non-increasing, we always approximate
the head of the series
$$f=\sum_{n=1}^\infty\omega_n\gamma_n,$$
and exclude from the approximation the tail of that series. Moreover, 
since the sub-budgets $N(n)$ depend only on $N$, the number of curvelets
that we approximate depends also only on $N$. 
The scheme $\mathcal{T}_N$, however,
is non-linear: the non-linearity
is due here to the fact that the $n$th curvelet $\gamma_n$  in the rearranged series depends on the 
function $f$, hence the actual curvelets that are approximated are 
$f$-dependent.

\subsection{Organization}
The paper is structured as follows.
In section 2, we present some background material on  the curvelet system, 
a more  technical overview of the $N$-term approximation
algorithm, and a more precise statement of the main theorem. We also state
two lemmas, Lemma \ref{lemma-1} and Lemma \ref{lemma-2}, 
that are pillars in the proof of the theorem.

Section 3  contains the proof of the theorem. More precisely,
it shows how to invoke Lemmas \ref{lemma-1} and \ref{lemma-2} in order
to yield the result, but does not contain the proofs of the lemmas.

Lemma \ref{lemma-1}, which describes how the approximation of individual
curvelets ({\bf Step I} in the algorithm) is carried out, is proved in 
section 4.

Lemma \ref{lemma-2}, which shows, roughly, that the system of individual 
errors forms a Bessel system (which is an exceptionally subtle
fact) is proved in section 5. 
Our proof  of this fact uses the theory of generalized
shift invariant systems (GSI theory); the argument also incorporates
ideas from the recent work of 
de Hoop, Gr{\"o}chenig, and Romero 
\cite{dHGR}
(indeed, a key technical result, specific to working with 
the curvelet system, has been adapted from their paper -- see Lemma \ref{lem=star_lemma} below).

The Appendix contains 
a technical construction of the curvelet system; the latter
details are needed for the proofs in section 5.

\section{Algorithm and main results}

%
%
\subsection{Basics on curvelets} \label{sec=basicscurvelets}
In this section, we outline some relevant details on curvelets. 
More specific details are collected in  Appendix \ref{append}. 
Our source for this ``background curvelet material" is \cite{CD}.

 We denote the curvelet dictionary by $\Dcurv$.
 This is  a countable subset of the Schwartz space $\mathcal{S}(\Rr^2)$.
 Moreover, each curvelet is band-limited; $\Dcurv$ is a tight frame for $L_2(\Rr^2)$.

 A given curvelet  $\gamma\in \Dcurv$ is associated with an index vector $\mu:=\mu(\gamma)$.
The index $\mu(\gamma)$ is a triplet:  
$\mu = (\mu(1),\mu(2),\mu(3))=:(j(\gamma),\ell(\gamma),k(\gamma))$
 -- it determines the scale $j:=j(\gamma)$, the orientation $\ell:=\ell(\gamma)$,
and the placement $k:=k(\gamma)$ of the curvelet $\gamma$ in $\Rr^2$. 

The curvelets from a single, fixed, 
scale level $j$ are generated by suitable unitary 
operations applied to a single prototype
curvelet $\gam^{(j)}$ (which we call, once the scale $j$ is fixed,
the {\em curvelet generator}, for the curvelets in that scale).
Each generator $\gam^{(j)}$ is a band-limited Schwartz function 
with non-negative Fourier transform.
The two other entries in $\mu(\gamma)$, for
a curvelet $\gamma$ at scale $j$,  are the rotation parameter
$\ell$ that varies over the integers in
$[0, 2^{\lfloor j/2\rfloor}-1]$, and the translation parameter
$k$ that varies over a $j$-dependent lattice $\grid_j\subset\Rr^2$:
\begin{equation} \label{eq:grid}
\grid_{j}  = \left\{\left( \frac{k_1}{\L_j}, \frac{k_2}{\lambda_j}\right)\mid (k_1,k_2)\in 2\pi \Zz^2 \right\},
\end{equation}
and where 
$\Lambda_j = { \delt_1(j) 2^j}$, $\lambda_j = \delt_2(j) 2^{\lfloor{j}/2\rfloor}$
are $j$ dependent lattice constants.\footnote{The $j$-dependent
grid correction factors 
$\delt_1$ and $\delt_2$ are defined in
Remark \ref{grid_correction_remark}, 
where it is shown that they  are positive and uniformly bounded from above, and away from $0$.  
See (\ref{correction_bounds}).}
The curvelet $\gamma$ has the form
\begin{equation}\label{eq:unitary_gam}
\gamma: x\mapsto
{|D_\gamma|}^{1/2} \gam^{(j)}\bigl(D_\gamma 
(R_{\gamma}^*x - k(\gam))\bigr), \quad x\in \Rr^2.
\end{equation}
Here, $D_{\gamma}$ is a dilation operator and $R_{\gamma}$ is a rotation operator 
(as is $R_{\gamma}^* = R_{\gamma}^{-1}$, naturally):
specifically, $D_\gamma$ is the parabolic scaling matrix
$$
D_\gamma := \begin{pmatrix} 2^{j(\gamma)}&0\\0&2^{\lfloor j(\gamma)/2\rfloor}\end{pmatrix},
$$
and $R_{\gamma}$ is a counter-clockwise rotation by the angle 
$\pi \ell(\gamma)\,2^{-\lfloor j(\gamma)/2\rfloor}$. 
We occasionally use the notation 
$D_j $ (with $j:=j(\gamma)$) in place of $D_{\gamma}$, and use
$R_{j,\ell}$ or $R_\mu$ in place of $R_{\gamma}$ (with $\mu:=\mu(\gamma)=:
(j,\ell,...))$.\footnote{ The dilation $D$ depends only on the scale $j$, hence
it could be embedded into the definition of $\gam^{(j)}$. However, in our 
definition of the curvelet generators, they are all similar to each other 
and have some uniformity, which we need to exploit later.}

We make frequent use of the following two facts
\begin{itemize}
\item The map $\mu:\Dcurv \to \{(j,\ell,k)\mid j\in \Nn,\  \ell\in \{0,\dots, 2^{\lfloor j/2\rfloor}-1\},\  k\in \grid_j\}$ 
from curvelets to their indices is a bijection.
\item Every curvelet is obtained from its generator via a unitary 
transformation:
$\gamma = U_{\gamma} \gam^{(j)}$
where $U_{\gamma}:L_2(\Rr^2)\to L_2(\Rr^2)$ is the unitary operator defined as
$$U_{\gamma} :F \mapsto {|D_\gamma|}^{1/2}F\bigl(D_{\gamma}  
(R_{\gamma}^*\cdot - k(\gamma))\bigr).$$
\end{itemize}

%
%
\subsection{The anisotropic smoothness class $\mathcal{C}_\alpha$}\label{curvelet_class}

Throughout this article, we make two assumptions on the target function $f$:
membership in $L_{2}$ (corresponding to the fact that the error is measured
in $L_2$), and
a sparsity condition on the curvelet coefficients.
Since the curvelets  
$\Dcurv$
form a tight frame, we can expand any  
$f \in L_2(\Rr^2)$ as $f = \sum_{\gamma\in \Dcurv} \omega(\gamma) \gamma$
with the expansion coefficients
$$\omega(\gamma) = \langle f,\gamma\rangle.$$
We denote by $\omega^* = (\omega_n)_{n\in \Nn}$ 
the decreasing rearrangement of the coefficients $(\omega(\gamma))_\gamma$,
i.e.,
$$f=\sum_{n=1}^\infty\omega_n\gamma_n,$$
with $\gamma_n$ the curvelet with the $n$th largest coefficient in the expansion
of $f$, and $\omega_n:=\omega(\gamma_n)$.
Given $\alpha>1/2$, we assume then that $f\in\mathcal{C}_\alpha$, which means,
by definition, that
\begin{equation} \label{eq:cartoonclass}
|\omega_n| \leq C n^{-\alpha},\quad \forall n.
\end{equation}
It is not hard to see that $ \mathcal{C}_{\alpha}$ is a vector space 
(indeed, it is the space of functions whose curvelet coefficients lie in the Lorentz space 
$\ell_{\frac{1}{\alpha}, \infty}$).
For a given  function $f\in \mathcal{C}_{\alpha}$, we define  ${\curvenorm{f}}$ 
to be the smallest constant $C$ for which (\ref{eq:cartoonclass}) is valid. 
It is straightforward then to obtain the $N$-term approximation
rate by curvelets
for $f\in \mathcal{C}_\alpha$: with $s_N(f):=\sum_{n=1}^N\omega_n\gamma_n$,
one readily obtains the following {\it sharp} estimate:
$\|f - s_N(f)\|_2 \le  N^{\frac12 -\alpha}{\curvenorm{f}}/\sqrt{2\alpha-1}$. 
%
%

%
%
\subsection{The Gaussian mixture approximation algorithm}\label{sec=algorithm}

Given a `budget' $N$, and a function $f\in\mathcal{C}_\alpha$,
we want to
approximate $f$ by an expansion of the form
$$
\T_N f = \sum_{n=1}^{m^*} \omega_n G_{\gamma_n, N(n)},
$$
where $m^*$ is an integer -- the index of the terminal curvelet to be approximated,
$G_{\gamma_n, N(n)}$ is a linear combination of $N(n)$ 
Gaussians from the Gaussian dictionary
$\mathcal{D}$ - a combination that approximates the curvelet $\gamma_n$, and
each $N(n)$ represents the portion of the full budget
$N$ which we devote to approximating the
single curvelet $\gamma_n$.
Our budgeting algorithm, to be specified later, satisfies the following:
\begin{enumerate}
\item The sub-budgets sum to (at most) the total budget: $\sum_{n=1}^{m^*} N(n) \le N$.
\item  The sub-budgets are constant (i.e., are all the same)
on each dyadic interval $2^{\nu-1}< n\le 2^{\nu}$; so $N(n) = N(2^\nu)$, for
every such $n$.
\item There is a minimal sub-budget $N_0\ge 1$ 
which represents the smallest positive 
investment\footnote{To satisfy the requirements of our individual approximation scheme ({\bf Step I}), 
we may take $N_0  = 3$. 
In principle, one could replace the individual approximation scheme we use (described in section 4), 
with another requiring a different value of $N_0$.
}
we can make in a curvelet. 
Thus $N(n)\ge N_0$ for each $n\le m^*$.
\end{enumerate}
Another aspect of our analysis below is that the sub-budgets are monotone with $|\omega_n|$. 
As a consequence, the condition $N(m^*)\ge N_0$ is
sufficient to guarantee item 3 above.

\subsubsection{Step I: approximating individual curvelets}\label{subsubsec=step2}
For a given $M \in\Nn$, we approximate the curvelet $\gamma$ by 
an $M$-term linear combination of Gaussians, which we call $G_{\gamma,M}$.
To this end, we first approximate, with $j:=j(\gam)$,
the generator
$\gam^{(j)}$ by $M$ members of $\mathcal{D}$.
We denote by $\G_M^{(j)}$ the $M$-term approximant to the generator $\gam^{(j)}$.
 In fact, with
$\phi:x\mapsto e^{-|x|^2}$, we use in this initial approximation
only {\it translations} of $\phi$: at most $M$ translations.
Now, $\gamma$ is obtained from $\gam^{(j)}$ by a unitary operator $U_\gamma$ 
(a composition of translation, rotation and dilation, as described in section \ref{sec=basicscurvelets}):
$\gamma= U_\gamma \gam^{(j)}.$
So, we define
\begin{equation} \label{eq:onetermgaussian}
G_{\gamma,M}:=U_\gamma \G_M^{(j)}.
\end{equation}
Obviously, $G_{\gamma,M}$ is a linear combination of (at most) $M$ functions,
each obtained by applying a unitary change of variables to a shift of $G$, 
hence each is a member of $\mathcal{D}$. So, $G_{\gamma,M}$ is a Gaussian mixture
with $M$ components.
The exact details of this construction, as well as an analysis  of the error 
are given in section \ref{sec:individualerror}.

\subsubsection{Step II: Investment strategy}
Given (any, but fixed) $0<\beta<1$, 
and  a total budget $N>0$, we  define a cost function $\cost:\Nn\to [0,\infty)$ 
 \begin{equation}\label{cost}
 \cost(\m) :=  
C(\beta) \left(\frac{N}{\m}\right)^{\beta},
\end{equation}
where $C(\beta): = (1-\beta)^{\beta} $.
This  cost function roughly determines the number of Gaussians to invest  in the curvelet $\gamma_{n}$.
For $\nu \in \Nn$, set $\M_{\nu}:= \{n\in \Nn \mid  2^{\nu-1} < {n} \le 2^{\nu}\}.$
Define
 \begin{equation} \label{eq:investmentstrategy}
 N(n) := \lfloor \cost(2^\nu) \rfloor ,\qquad n\in \M_{\nu}.
\end{equation}
It follows that $ N(n)\le \cost(n)\le 2^{\beta} N(n)$ for all $n\in \Nn$.
  
With $N_0\ge 1$ the minimal number
 of Gaussians we may use for approximating
an underlying curvelet, we set
\begin{equation}\label{terminal_index}
\m^*:= \max 
\left\{2^{\nu}\mid  2^\nu \le \frac{1-\beta}{ N_0^{1/\beta}}N\right\}.
\end{equation}
This is the terminal index which distinguishes the principal part of the curvelet expansion 
(containing the terms that we approximate) and the tail. 
 With this choice,
$$
  N(m^*)  = \lfloor c(m^*)\rfloor \ge 
  \lfloor N_0 \rfloor  = N_0.
$$
Thus $N(m)\ge N_0$ for all $m\le  m^*$. 
It then follows that
$$ 
\sum_{m=1}^{m^*} N(m) 
\le
 C(\beta)N^{\beta} \sum_{m=1}^{m^*} m^{-\beta}
\le \frac{ C(\beta)N^{\beta}}{1-\beta} (m^*)^{1-\beta}
= \frac{C(\beta)}{1-\beta} \frac{(1-\beta)^{1-\beta}}{(N_0^{1/\beta})^{1-\beta}} N \le N.
$$
In other words, we do not outspend our budget.

\smallskip
We summarize the steps of our scheme in the following Algorithm \ref{algorithm1}.

\begin{center}
\begin{algorithm}[H] \label{algorithm1}

\vspace{4mm}

\KwIn{A budget $N \in \Nn$, and  the decreasing rearrangement of
a function $f \in \mathcal{C}_{\alpha}$.
}

\vspace{2mm}

\textbf{Investment:} 
Distribute the budget $N$ over the single curvelets $\gamma_n$ in parts 
$N(n)$ based on the strategy
\eqref{eq:investmentstrategy}.  \\[2mm]

\textbf{Approximation of curvelet generators:} For each feasible investment
$N(n)$ (i.e., any sub-budget for which $N(n)\ge N_0$)
construct
$\G_{N(n)}^{(j)}$
 the $N(n)$-term Gaussian approximation
 to the curvelet generator $\gam^{(j)}$, with $j:=j(\gam_n)$ -
the scale of the $n$th curvelet in the decreasing rearrangement of
$f$.\footnote{Note that this Gaussian approximation coincides for any two 
curvelets that have identical budget and are in the same scale}\\[2mm]

\textbf{Individual curvelet approximation:} For each $\gamma_n$ with $n\le m^*$, 
construct the $N(n)$-term linear combination $G_{\gamma_n,N(n)}$ to $\gamma_{n}$ by
using formula (\ref{eq:onetermgaussian}) and the relevant approximant from the previous step. \\[2mm]

\textbf{Approximation procedure:} 
 Form the full Gaussian approximation of $f$  as
 \[ 
 \T_N f = \sum_{n=1}^{m^*} \omega_n G_{\gamma_n,N(n)}.
 \]
 \\
\caption{$N$-term curvelet approximation of $f$ using Gaussians}
\end{algorithm}
\end{center}
%
%

%
%
\subsection{The main result}
The main result of this article reads as follows:
%
\begin{theorem} \label{thm-main}
For $\alpha>1/2$ and $f \in \mathcal{C}_{\alpha}$ with decreasing rearrangement curvelet
representation
$$f=\sum_{n=1}^\infty \omega_n\gamma_n,$$
and a given budget $N$, there exists an 
$N$-term Gaussian approximation $\T_N f$ of $f$ such that
\[
\|\T_N f - f\|_2 \leq C \curvenorm{f}  {N}^{\frac12 -\alpha}.
\]
The constant $C$ is independent of $N$ and of $f$.
The approximant $\T_Nf$ has the form
\[ 
\T_N f = \sum_{n=1}^{m^*} \omega_n G_{\gamma_n,N(n)},
\]
where $G_{\gamma_n,N(n)}$ is a linear combination of $N(n)$ Gaussians 
based on the construction \eqref{eq:onetermgaussian}, 
and $N(n)$ is chosen according to the strategy \eqref{eq:investmentstrategy}.
In particular, the  number of approximated curvelets $m^*$ is independent of $f$
and the sub-budgets $N(n)$ are independent of $f$ and $\alpha$.
\end{theorem}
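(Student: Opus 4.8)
The plan is to bound $\|\T_N f - f\|_2$ by splitting it into a \emph{truncation} (tail) error and an \emph{approximation} (head) error and controlling each separately. Writing $E_n := G_{\gamma_n,N(n)} - \gamma_n$ for the individual curvelet approximation error, the identity
\[
\T_N f - f \;=\; \sum_{n=1}^{m^*}\omega_n E_n \;-\; \sum_{n=m^*+1}^{\infty}\omega_n\gamma_n
\]
reduces the theorem, via the triangle inequality, to bounding the head term $\bigl\|\sum_{n\le m^*}\omega_n E_n\bigr\|_2$ and the tail term $\bigl\|\sum_{n>m^*}\omega_n\gamma_n\bigr\|_2$ each by a constant multiple of $\curvenorm{f}\,N^{\frac12-\alpha}$.

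The tail is the easy part and is essentially the sharp curvelet estimate already recorded in Section \ref{curvelet_class}. Since $\Dcurv$ is a tight frame its synthesis operator is bounded on $\ell_2$, so $\bigl\|\sum_{n>m^*}\omega_n\gamma_n\bigr\|_2 \le C\bigl(\sum_{n>m^*}\omega_n^2\bigr)^{1/2}$. Inserting $|\omega_n|\le\curvenorm{f}\,n^{-\alpha}$ and summing yields $\le C\,\curvenorm{f}\,(m^*)^{\frac12-\alpha}/\sqrt{2\alpha-1}$. It then suffices to observe that the terminal index satisfies $m^*\asymp N$: indeed \eqref{terminal_index} forces $\tfrac12\,\tfrac{1-\beta}{N_0^{1/\beta}}N < m^* \le \tfrac{1-\beta}{N_0^{1/\beta}}N$, whence $(m^*)^{\frac12-\alpha}\le C(\alpha,\beta,N_0)\,N^{\frac12-\alpha}$ (using $\tfrac12-\alpha<0$).

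For the head I would invoke the two pillar lemmas. Lemma \ref{lemma-2}, the quasi-orthogonality of the error system, supplies the Bessel-type inequality
\[
\Bigl\|\sum_{n\le m^*}\omega_n E_n\Bigr\|_2^2 \;\le\; B\sum_{n\le m^*}\omega_n^2\,\|E_n\|_2^2 ,
\]
with $B$ independent of the chosen budgets; this is the deep analytic input. Lemma \ref{lemma-1} then furnishes the individual bound. Since $\gamma_n = U_{\gamma_n}\gam^{(j)}$ and $G_{\gamma_n,N(n)} = U_{\gamma_n}\G^{(j)}_{N(n)}$ by \eqref{eq:onetermgaussian}, and $U_{\gamma_n}$ is an isometry, we have $\|E_n\|_2 = \|\G^{(j)}_{N(n)}-\gam^{(j)}\|_2 \le \mathcal{E}(N(n))$, where $j=j(\gamma_n)$ and $\mathcal{E}$ is a high-order (in fact, arbitrarily high order) decaying bound depending only on the sub-budget. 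Combining the two lemmas, the head error is at most $\sqrt{B}\bigl(\sum_{n\le m^*}\omega_n^2\,\mathcal{E}(N(n))^2\bigr)^{1/2}$.

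The remaining task, and the main obstacle in assembling the proof, is to verify that this last sum balances to the target rate. Using $|\omega_n|\le\curvenorm{f}\,n^{-\alpha}$ together with the budget law $N(n)\asymp (N/n)^{\beta}$ coming from \eqref{cost}--\eqref{eq:investmentstrategy}, and writing the individual decay as $\mathcal{E}(M)\le C_s M^{-s}$, the sum becomes a constant multiple of $\curvenorm{f}^2\,N^{-2\beta s}\sum_{n\le m^*} n^{-2\alpha+2\beta s}$. Provided the order $s$ is large enough that $2\beta s > 2\alpha-1$ (which Lemma \ref{lemma-1} allows, since $s$ may be taken as high as needed for the fixed $\alpha$), the exponent exceeds $-1$, so $\sum_{n\le m^*} n^{-2\alpha+2\beta s}\asymp (m^*)^{1-2\alpha+2\beta s}\asymp N^{1-2\alpha+2\beta s}$; the factors $N^{\pm 2\beta s}$ cancel and leave $\curvenorm{f}^2 N^{1-2\alpha}$, i.e.\ a head error of $C(\alpha)\,\curvenorm{f}\,N^{\frac12-\alpha}$. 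Adding the head and tail bounds completes the estimate; the resulting constant depends on $\alpha$ through this balancing, but, as claimed, neither $m^*$ nor the sub-budgets $N(n)$ do. The genuine difficulty is concentrated entirely in Lemma \ref{lemma-2}: establishing a budget-uniform Bessel bound for a family of \emph{difference} functions (curvelet minus Gaussian mixture) that lack the frame structure of the curvelets, which is precisely why its proof must pass to the frequency side.
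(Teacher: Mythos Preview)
Your tail estimate and the final balancing arithmetic are fine and match the paper. The gap is in your invocation of Lemma~\ref{lemma-2} for the head.

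First, the inequality you write is not what Lemma~\ref{lemma-2} asserts. The lemma gives a Bessel bound $\bigl\|\sum_\gamma c_\gamma \psi_\gamma\bigr\|_2 \le \varTheta\, \|(c_\gamma)\|_{\ell_2}$ for a system $\{\psi_\gamma = U_\gamma \Ps_{j(\gamma)}\}$ built from \emph{one generator $\Ps_j$ per scale $j$}; it does not produce an almost-orthogonality estimate with $\|E_n\|_2^2$ weights on the right-hand side. Normalizing each $E_n$ by its own $L_2$-norm is the wrong move: you have no lower bound on $\|E_n\|_2$ (the individual error decays faster than any power of the budget), so $E_n/\|E_n\|_2$ cannot be shown to satisfy the Fourier-side hypotheses of the lemma with a uniform constant.

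Second, and more fundamentally, the full head sum $\sum_{n\le m^*}\omega_n E_n$ is not covered by Lemma~\ref{lemma-2} at all, because each $E_n = U_{\gamma_n}\EE_{j(\gamma_n),N(n)}$ depends on the sub-budget $N(n)$, which varies with $n$ and not merely with the scale $j(\gamma_n)$. Two curvelets in the rearrangement sharing the same scale but sitting at different positions $n$ receive different budgets, hence correspond to \emph{different} generator-level errors $\EE_{j,N(n)}$; there is no single family $(\Ps_j)_j$ to which the lemma applies.

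The paper's proof repairs this by first splitting the head into the dyadic blocks $\M_\nu$ via the triangle inequality (only $O(\log N)$ of them). On each $\M_\nu$ the budget is constant, $N(n)=M_\nu$, so one may legitimately set $\Ps_j := M_\nu^{K}\, \EE_{j,M_\nu}$; Lemma~\ref{lemma-1} then guarantees these satisfy the hypotheses of Lemma~\ref{lemma-2} with a constant independent of $\nu$, whence a \emph{uniform} Bessel constant $\varTheta$. This yields $\bigl\|\sum_{n\in\M_\nu}\omega_n E_n\bigr\|_2 \le \varTheta\, M_\nu^{-K}\bigl(\sum_{n\in\M_\nu}|\omega_n|^2\bigr)^{1/2}$, after which the sum over $\nu$ collapses to essentially your balancing computation. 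The dyadic grouping and the normalization by the explicit power $M_\nu^{K}$ (rather than by the unknown $\|E_n\|_2$) are the missing ingredients.
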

%

The proof of Theorem \ref{thm-main} is based on the following two results 
that are derived separately in the upcoming sections 4 and 5.
The first result estimates the individual errors for the approximation
of the curvelet generators $\gam^{(j)}$ with an appropriate linear combination 
$\G_{j, M}$ of 
$M$ Gaussians.

%
\begin{lemma}[Individual curvelet approximation] \label{lemma-1}
For a given budget $M$, and a scale $j$,
there exists a linear combination $\G_{j, M}$ of $M$ Gaussians such that
 $\EE_{j,M} := \gam^{(j)} - \G_{j,M}$
obeys the following estimate:
for every $K$, 
there is a constant $C_K$ so that for all $M>0$,  $j\ge 4$ and $\xi=(\xi_1,\xi_2)\in \Rr^2$, the inequality
  \begin{equation}\label{one_term_rotation_error_mod}
  |\widehat{\EE_{j,M}}(\xi)|
 \le 
 C_K { M^{-K}} \min\bigl(|\xi_1| ,1\bigr)^{2}
 \left(1+  | \xi  | \right)^{-\LLL},
 \end{equation}
 holds, 
 while for  $j<4$ we have $  |\widehat{\EE_{j,M}}(\xi)|
 \le 
 C_K { M^{-K}} 
 \left(1+  | \xi  | \right)^{-\LLL}.
$
\end{lemma}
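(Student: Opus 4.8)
The plan is to carry out the whole construction on the Fourier side and reduce it to a weighted trigonometric approximation problem, in the spirit of the wavelet-by-Gaussian constructions of \cite{DR,HR}. Since each generator $\gam^{(j)}$ is a band-limited Schwartz function, $\widehat{\gam^{(j)}}\in C_c^\infty(\Rr^2)$; and since the Gaussian $\phi$ has $\hat\phi(\xi)=\const\cdot e^{-|\xi|^2/4}$ nowhere vanishing, the quotient $g_j:=\widehat{\gam^{(j)}}/\hat\phi$ is again a $C_c^\infty$ function, with support and $C^N$-norms uniform in $j$ (this is where I invoke the stated uniformity of the generators). For $j\ge 4$ the frequency support sits in a strip $\{c_0\le|\xi_1|\le c_1\}$ bounded away from the axis $\xi_1=0$, so $g_j\equiv 0$ in a fixed neighborhood of that axis. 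Because any approximant $\G=\sum_i a_i\phi(\cdot-x_i)$ has $\widehat{\G}(\xi)=\hat\phi(\xi)P(\xi)$ with $P(\xi)=\sum_i a_i e^{-i x_i\cdot\xi}$, the error transforms into $\widehat{\EE_{j,M}}=\hat\phi\,(g_j-P)$, and the task becomes: build an exponential sum $P$ with $\le M$ frequencies $x_i$ that approximates $g_j$ on its support, stays controlled off it, and forces $\hat\phi P$ to vanish to second order on $\xi_1=0$.

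For the approximation itself I would first deflate the axis behaviour. Fix a small $O(1)$ offset $a$ and write $g_j=4\sin^2(a\xi_1/2)\,Q_j$ with $Q_j:=g_j/(4\sin^2(a\xi_1/2))$; since $g_j$ vanishes identically near $\xi_1=0$ and $a$ is chosen so that $\sin(a\xi_1/2)$ has no zero on the support, $Q_j$ is again $C_c^\infty$ with $j$-uniform norms. I then approximate $Q_j$ by a truncated Fourier series: periodize it over a cell $[-L/2,L/2]^2$ with $L=2\pi/h$ large enough to contain its support, so the periodization is $C^\infty$ on the torus and its Fourier coefficients $c_m=L^{-2}\widehat{Q_j}(hm)$ decay faster than any polynomial; retaining the $|m|\le K_0$ terms, with $M\asymp K_0^2$, yields $\|Q_j-\tilde P\|_\infty\le C_N M^{-N/2}$ uniformly in $j$. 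Setting $P:=4\sin^2(a\xi_1/2)\,\tilde P$ reattaches the factor, which in space is a fixed symmetric second difference of $\phi$ in the $x_1$-variable; in particular $\widehat{\G}=\hat\phi P$ carries the factor $\sin^2(a\xi_1/2)$ and so vanishes to order $2$ along $\xi_1=0$. All points used are shifts of $\phi$, so $\G_{j,M}$ is a genuine $M$-term element of $\operatorname{span}\Dgauss$, and the three-point second difference explains why a minimal budget $N_0=3$ is enough.

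It then remains to convert this into \eqref{one_term_rotation_error_mod}, which I would verify region by region. On $\operatorname{supp}\widehat{\gam^{(j)}}$ the error equals $\hat\phi\,(g_j-P)$, bounded by $C_N M^{-N/2}$, while there both $(1+|\xi|)^{-\LLL}$ and $\min(|\xi_1|,1)^2$ are bounded below by positive constants, so the claim holds. On the near-axis strip $\{|\xi_1|<c_0\}$ one has $\widehat{\gam^{(j)}}=0$, so the error is $-\hat\phi P$; there $\tilde P$ approximates $Q_j\equiv 0$, hence $|\tilde P|\le C_N M^{-N/2}$, while the prefactor obeys $4\sin^2(a\xi_1/2)\le a^2\xi_1^2$, giving $|\hat\phi P|\le C_N\,\xi_1^2\,M^{-N/2}$ and supplying exactly the $\min(|\xi_1|,1)^2$ factor. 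In the far field $|\xi|\ge c_1$ the error is again $-\hat\phi P$, with $|P|=O(1)$ but $\hat\phi$ decaying like a Gaussian.

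The genuine difficulty, and the crux of the lemma, lives in that far field: $P$ is (quasi-)periodic and its replicas reappear at $|\xi|\sim L/2=\pi/h$, where $\hat\phi$ is only as small as $e^{-\pi^2/(4h^2)}$ and does \emph{not} shrink with $M$. I would resolve this by letting the lattice spacing $h=h(M)$ tend to $0$ slowly, $h\sim(\log M)^{-1/2}$, so that $e^{-\pi^2/(4h^2)}\le C_K M^{-K}$ kills the replicas while $hK_0\sim\sqrt{M/\log M}\to\infty$ preserves the spectral accuracy used above. Balancing these two competing tails — super-algebraic accuracy on the support against replica suppression in the far field, all the while keeping the quadratic axis-vanishing with the same $M^{-K}$ smallness — is the main obstacle; uniformity in $j$ follows throughout from the uniform bounds on $g_j$ and $Q_j$, and the finitely many coarse scales $j<4$, whose generators are not supported away from $\xi_1=0$, are treated by the same construction with the $\sin^2$ deflation omitted, yielding the stated weaker bound $C_K M^{-K}(1+|\xi|)^{-\LLL}$.
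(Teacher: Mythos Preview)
Your overall architecture coincides with the paper's: write the approximant as $\hat\phi$ times a trigonometric polynomial obtained by sampling the deconvolved generator on a lattice $h\Zz^2$, and force the directional vanishing moments by conjugating with a second-difference operator whose symbol is $\sin^2(a\xi_1/2)$. The paper carries this out via a smoothly truncated semi-discrete convolution $T_h^\flat$ and the operator $T_h=\varDelta\, T_h^\flat\,\varDelta^{-1}$; your ``deflate by $\sin^2$, approximate $Q_j$, reattach $\sin^2$'' is exactly this conjugation, and your observation that $N_0=3$ matches the paper's remark that $N_0=J+1$.

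There is, however, a concrete quantitative error in your replica analysis. With $h\sim(\log M)^{-1/2}$ one gets
\[
e^{-\pi^2/(4h^2)}\;=\;e^{-(\pi^2/4)\log M}\;=\;M^{-\pi^2/4},
\]
which is a \emph{fixed} polynomial rate ($\pi^2/4\approx 2.47$), not $C_K M^{-K}$ for every $K$. Since the approximant $\G_{j,M}$ must be chosen once and then satisfy the bound for \emph{all} $K$, this choice of $h$ fails for $K>\pi^2/4$. The paper avoids the difficulty by tying the spacing and the truncation radius together: it takes \emph{all} points of $h\Zz^2$ inside a ball of radius $2/h$, so that $M\asymp h^{-4}$, i.e.\ $h\sim M^{-1/4}$. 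Then $e^{-c/h^2}=e^{-cM^{1/2}}$ is super-polynomially small, and on the support the error picks up a clean factor $h^{K'}$ which becomes $M^{-K'/4}$. Your two-parameter scheme can be repaired in the same way: keep $K_0\sim\sqrt{M}$ but take $h\sim M^{-\epsilon}$ for any fixed $0<\epsilon<1/2$ (e.g.\ $\epsilon=1/4$). Then $hK_0\sim M^{1/2-\epsilon}\to\infty$ still gives spectral accuracy $(hK_0)^{2-N}\le M^{-K}$ for suitable $N$, while $e^{-\pi^2/(4h^2)}=e^{-cM^{2\epsilon}}\le C_K M^{-K}$ now genuinely holds for every $K$. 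A secondary difference is that the paper uses a \emph{smooth} spatial cutoff $\sigma$ rather than your sharp mode truncation $|m|\le K_0$; this is what lets them get the clean $h^K(1+|\xi|)^{-K}$ bound in one stroke, though your direct tail estimate also works once $h$ is chosen correctly.
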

%

%
\begin{definition}
Letting $\gamma = U_{\gamma} \gam^{(j)}$, as discussed in  section \ref{sec=basicscurvelets}, the function
$G_{\gamma,M} := U_{\gamma} \G_{j,M}$ is an $M$-term Gaussian mixture.
We refer to $E_{\gamma,M} := \gamma - G_{\gamma,M}$ as
the {\em $M$-term individual error} (of the curvelet $\gamma$).
\end{definition}
%

Our second major  lemma  gives conditions on a family indexed by $\Dcurv$ to be a Bessel system.
We will use it to show that, for a fixed $M$, the individual error functions
in the $M$-term approximations of the curvelets, when properly normalized,
form a Bessel system.
%
\begin{lemma}[Bessel system] \label{lemma-2}
Suppose $\{\Ps_{j}\mid j\in \Nn\}$ is a family of functions  in $L_2(\Rr^2)$  satisfying the following:
there exists a constant  $C$,  so that for all $j\ge 4$, and all $\xi=(\xi_1,\xi_2)\in\Rr^2$,
$$|\widehat{\Ps_{j}}(\xi)|\le C \min(|\xi_1|^2,1) (1+|\xi|)^{-\LLL},$$
while $|\widehat{\Ps_{j}}(x)|\le C (1+|\xi|)^{-\LLL}$ holds for $j< 4$.
Then there is a constant $\varTheta$ (depending only on $C$) so that 
the family
$
\Derr:=
\{ 
  {\psi}_{\gamma} := U_{\gamma} \Ps_{j(\gamma)}\mid \gamma \in \Dcurv, 
  \gamma=U_{\gamma} \gam^{(j(\gamma))}\}
$
forms a Bessel system with Bessel constant $\varTheta$. 
In other words, the bound
$$
\left\| \sum_{\gamma \in \Dcurv}\omega_{\gamma} 
{\psi}_{\gamma}\right\|_{L_2(\Rr^2)} 
\leq 
\varTheta \|(\omega_{\gamma})_{\gamma}\|_{\ell_2}
$$
holds, for any square summable coefficients $(\omega_{\gamma})_{\gamma}$.
\end{lemma}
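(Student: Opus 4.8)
The plan is to recognize $\Derr$ as a generalized shift-invariant (GSI) system and to verify the standard Fourier-domain sufficient condition for the Bessel property. The first step is to put each $\psi_\gamma$ into a genuine shift-invariant form. Since $R_\gamma^* R_\gamma = I$, a direct computation gives
$$\psi_\gamma(x) = U_\gamma\Ps_{j(\gamma)}(x) = g_{j,\ell}\bigl(x - R_\gamma k(\gamma)\bigr), \qquad g_{j,\ell}(x) := |D_j|^{1/2}\,\Ps_j\bigl(D_j R_{j,\ell}^* x\bigr),$$
where $j = j(\gamma)$ and $\ell = \ell(\gamma)$. Thus $\Derr = \{\, g_{j,\ell}(\cdot - \lambda) \mid \lambda \in \Gamma_{j,\ell}\,\}$ is a GSI system whose generators are indexed by the pairs $(j,\ell)$ and whose translation lattices are $\Gamma_{j,\ell} = R_{j,\ell}\grid_j = 2\pi R_{j,\ell}B_j^{-1}\Zz^2$, with $B_j = \mathrm{diag}(\L_j,\lambda_j)$. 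The dual lattices are then $\Gamma_{j,\ell}^* = R_{j,\ell}B_j\Zz^2$, with spacing $\sim 2^j$ along $R_{j,\ell}e_1$ and $\sim 2^{j/2}$ along $R_{j,\ell}e_2$. On the Fourier side $\widehat{g_{j,\ell}}(\xi) = |D_j|^{-1/2}\widehat{\Ps_j}\bigl(D_j^{-1}R_{j,\ell}^*\xi\bigr)$, so the hypothesis on $\widehat{\Ps_j}$ transfers verbatim to a decay estimate for each $\widehat{g_{j,\ell}}$, adapted to a parabolic wedge oriented along $R_{j,\ell}e_1$ at radius $\sim 2^j$.

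With this in hand I would invoke the GSI Bessel criterion: it suffices to bound, uniformly in $\xi$,
$$B := \sup_\xi\ \sum_{j,\ell}\frac{1}{\mathrm{covol}(\Gamma_{j,\ell})}\sum_{\alpha\in\Gamma_{j,\ell}^*}\bigl|\widehat{g_{j,\ell}}(\xi)\bigr|\,\bigl|\widehat{g_{j,\ell}}(\xi+\alpha)\bigr|,$$
after which $\Derr$ is Bessel with $\varTheta = \sqrt{B}$, a quantity depending on $C$ alone. I would split the inner sum into the diagonal part $\alpha = 0$ and the off-diagonal part $\alpha\neq 0$. The diagonal part is the Calder\'on-type sum $\sum_{j,\ell}|\widehat{\Ps_j}(D_j^{-1}R_{j,\ell}^*\xi)|^2$ (the $L_2$-normalization makes $\mathrm{covol}(\Gamma_{j,\ell})^{-1}$ cancel the $|D_j|^{-1}$ in $|\widehat{g_{j,\ell}}|^2$). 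To bound it uniformly, I would change variables to $\eta = D_j^{-1}R_{j,\ell}^*\xi$ and separate scales relative to $|\xi|$: coarse scales $2^j\lesssim|\xi|$ are controlled by the decay $(1+|\eta|)^{-\LLL}$; the matching scale $2^j\sim|\xi|$ contributes only through the $O(1)$ orientations aligned with $\xi$, the remaining ones being suppressed by the angular decay; and the fine scales $2^j\gg|\xi|$ are tamed by the second-order vanishing $\min(|\eta_1|^2,1)$, whose contribution decays geometrically in $j$ once the orientation sum is carried out. The finitely many levels $j<4$, where only the weaker bound is assumed, contribute a bounded amount trivially.

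The off-diagonal terms are the crux of the argument. Because each $\Ps_j$ is only polynomially localized in frequency (order $\LLL$, and in particular \emph{not} band-limited, unlike the curvelets themselves), the wedge $\widehat{g_{j,\ell}}$ and its dual-lattice translates $\widehat{g_{j,\ell}}(\cdot + \alpha)$ genuinely overlap, so one must show that $\sum_{\alpha\neq 0}|\widehat{g_{j,\ell}}(\xi)|\,|\widehat{g_{j,\ell}}(\xi+\alpha)|$ stays summable after the further summations over $\ell$ and over $j$, uniformly in $\xi$. For a single $(j,\ell)$ the dual spacing ($\sim 2^j$ radially, $\sim 2^{j/2}$ angularly) together with the decay $(1+|\eta|)^{-\LLL}$ renders this inner sum small relative to the diagonal; the delicate point is the simultaneous control of the sums over orientations and scales, where the second-order vanishing built into the hypothesis on $\widehat{\Ps_j}$ is again indispensable in taming the interaction between neighbouring angular wedges and between adjacent scales. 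Here I would rely on the curvelet-specific counting estimate recorded in Lemma~\ref{lem=star_lemma} (adapted from \cite{dHGR}) to organize the orientation-and-scale bookkeeping. Assembling the diagonal and off-diagonal bounds gives $B < \infty$ with $B$ depending only on $C$, which is exactly the asserted Bessel bound $\varTheta$.
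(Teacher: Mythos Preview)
Your proposal is correct and follows the same route as the paper: cast $\Derr$ as a GSI system, invoke the dual-Gramian Bessel criterion (Result~\ref{res:bessel}), and reduce the resulting estimate to the star-norm bound of Lemma~\ref{lem=star_lemma}. The only organizational difference is that the paper avoids your diagonal/off-diagonal split: it bounds the \emph{entire} inner lattice sum $\sum_{\zeta\in\hat\Upsilon_i}\bigl|\widehat{\Ps_j}\bigl(D_{-j}R_{j,\ell}^*(\xi-\zeta)\bigr)\bigr|$ at once---after the change of variables this becomes a sum of $(1+|\cdot-k|)^{-3}$ over a uniformly separated lattice, hence is bounded by a constant independent of $j,\ell,\xi$---which immediately collapses the whole dual-Gramian estimate to the single first-power sum $\sum_{j,\ell}\bigl|\widehat{\Ps_j}(D_{-j}R_{j,\ell}^*\xi)\bigr|$, handled directly by Lemma~\ref{lem=star_lemma}.
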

%

%
\begin{remark}We point out that the critical estimate on the individual error occurs on the Fourier domain. 
This is in contrast to the situation in \cite{HR}, which utilizes control on the (space-side) decay of the error 
(of course, in the present setting, we are only concerned with $N$-term error measured in $L_2(\Rr^2)$, which 
allows us to carry out our analysis on the Fourier domain).
\end{remark}
%

%
%
\section{Proof of the main theorem}\label{S:ProofMain}

The explicit construction $\G_M^{(j)}$ of the Gaussian approximation 
for the  curvelet generator $\gam^{(j)}$
is carried out in section \ref{sec:individualerror} in which Lemma \ref{lemma-1} is proven.
As mentioned in \eqref{eq:onetermgaussian}, an
$M$-term approximation of a single curvelet $\gamma = U_{\gamma} \gam^{(j)}$ 
is then obtained by dilating/rotating/translating the
Gaussian approximation $\G_{M}^{(j)}$ of the  curvelet generator: i.e., 
$G_{\gamma,M} = U_{\gamma} \G_M^{(j)}$. 
If the budgeting for the involved curvelets is based on the strategy \eqref{eq:investmentstrategy}, 
we obtain an approximation $\T_N f$ of $f$ consisting of at most $N$ Gaussians. 

We show in this section how to deduce Theorem \ref{thm-main} from 
Lemma \ref{lemma-1} and Lemma \ref{lemma-2}.

%
\begin{proof}[Proof of Theorem \ref{thm-main}]
Since, given the budget $N$, we approximate only the first $\m^\ast$ entries
in the curvelet expansion, the error is split accordingly:
$$
\T_Nf-f
=
\sum_{n=1}^{m^*}
  \omega_{n} ( \gamma_{n} - G_{\gamma_n,N(n)} ) 
  + 
\sum_{n=m^*+1}^{\infty} \omega_{n} \gamma_{n} .
$$
  By the triangle inequality we have
  $\|\T_Nf - f\|_2 \le I_+ + I_-$ where we introduce two parts of the error: the principal part
  $I_+ := 
  \|
  \sum_{n=1}^{m^*} \omega_{n} ( \gamma_{ n} - G_{\gamma_n,N(n)} ) 
  \|_2$,
  and the tail 
  $I_- :=\|\sum_{n=m^*+1}^{\infty} \omega_{n} \gamma_{n}  \|_2$.

 Because the curvelets form a Parseval frame 
 (see \cite[(2.10)]{CD} %
 we have that
\begin{equation}\label{tail_estimate}
I_-     
\le
\sqrt{\sum_{n> m^*} |\omega_{n}|^2}
\le
{\curvenorm{f}}
\sqrt{\sum_{n> m^*}    n^{-2\alpha}}
\le 
 C{\curvenorm{f}}  (m^*)^{1/2 -\alpha}
 \le
 C{\curvenorm{f}}N^{1/2-\alpha}.
\end{equation}
The last inequality follows from  the fact that $\m^\ast$ coincides
with $N$, up to a multiplicative $f$-independent, $N$-independent constant:
see (\ref{terminal_index}).

To treat $I_+$, we  break the error into manageable pieces by partitioning  
$\{1,\dots, m^*\}$
into dyadic intervals:
i.e., we partition
$
\{1,\dots, m^*\} = \bigcup_{\nu = 0}^{\log_2 m^*} \M_{\nu}
$
 where 
$
\M_{\nu}= \{n\le m^* \mid  2^{\nu-1} < {n} \le 2^{\nu}\}.
$
Then we have at most $1+\log_2 N$ 
different sub-budgets, because for every $n\in\M_\nu$
we have
$N(n) = M_\nu := N(2^{\nu})$.
Applying the triangle inequality gives
$$
I_+ = 
\left\|
  \sum_{n=1}^{m^*} \omega_{n} E_{\gamma_n,N(n)}
\right\|_{L_2}
\le
\sum_{\nu = 0}^{\lfloor \log_2 N\rfloor}
 \left\|
   \sum_{n\in \M_{\nu}} \omega_{n} E_{\gamma_n,M_{\nu}}
 \right\|_{L_2}.
$$
Now fix $K>\min( (\alpha-1/2)/\beta,6)$, and set $\rho:= K \beta$. 
By Lemma \ref{lemma-1}, there is a constant $C$ so that for all $\xi\in \Rr^2$,
$$(M_{\nu})^K \widehat{\EE_{j,M_{\nu}}}(\xi)\le 
C
\begin{cases} 
  \min\bigl(|\xi_1| ,1\bigr)^{2}
 \left(1+  | \xi  |^2 \right)^{-\LLL}, &j\ge 4,\\
  \left(1+  | \xi  |^2 \right)^{-\LLL},& j<4.
\end{cases}
$$
We are now in position to apply Lemma \ref{lemma-2}, with $\Ps_j$ there defined here
as $\Ps_j: = 
(M_{\nu})^K \EE_{j,M_{\nu}}$.
This means
that  $M_{\nu}^{-K} \psi_{\gamma} = E_{\gamma,M_{\nu}}$
  is the error that we obtain when approximating  the curvelet
$\gamma$
using the budget $M_\nu$.
From Lemma \ref{lemma-2}, we conclude that there is a constant $\varTheta$
(which depends on $C$ and $K$, but not on $M_{\nu}$),
so that the complete family of
normalized error functions 
$\bigl( (M_{\nu})^K {E}_{\gamma,M_{\nu}}\bigr)_{\gamma\in\Dcurv}$ 
forms a Bessel system
with Bessel bound $\varTheta$.

We can now finish estimating $I_+$ using this uniform (i.e., budget-independent)
Bessel property.
Namely,
\begin{equation}\label{principal_estimate}
I_+ 
\le
 \sum_{\nu =0}^{\lfloor \log_2 N\rfloor }
  \left\|
     \sum_{n\in \M_{\nu}} {\omega_{n}}\bigl(M_\nu\bigr)^{-K} \bigl(M_\nu\bigr)^{K} {E}_{\gamma_n,M_{\nu}}
  \right\|_{L_2} 
  \le
  \varTheta
    \sum_{\nu = 0}^{\lfloor \log_2 N\rfloor } 
     \sqrt{ \sum_{n\in \M_{\nu}}
     \bigl(M_{\nu}\bigr)^{-2K}
    | \omega_{n}|^2}.
\end{equation}  
This means we estimate $I_+\le \varTheta \sum _{\nu = 0}^{\lfloor \log_2 N\rfloor} I_{\nu}$, where for  each $\nu$, 
$I_\nu^2 := { \sum_{n\in \M_{\nu}}    \bigl(M_\nu \bigr)^{-2K}  |\omega_{n}|^2}$. 

The contribution to the error at  level $\nu$  can be estimated as
 $I_{\nu}^2
     \le
     (\# \M_{\nu})  \max_{n\in \M_{\nu}}  M_{\nu}^{-2K}
|\omega_n|^2$. 
We make the following elementary observations:
\begin{itemize}
\item Because $\M_{\nu} =(2^{\nu-1},2^{\nu}]$ we have
$\# \M_{\nu} \le 2^{\nu-1}.$
\item The definition of $M_{\nu}$ as $ \lfloor \cost(2^{\nu}) \rfloor=\lfloor C_{\beta} (2^{-\nu}N)^{\beta}\rfloor$
guarantees that $ M_{\nu} \ge \frac12   C_{\beta} (2^{-\nu}N)^{\beta}$. 
In turn, this gives
$M_{\nu}^{-2K} \le C N^{-2\rho} 2^{2\nu \rho}$ (with $C = (2/C_{\beta})^{2K}$).
\item The coefficients indexed by $\M_{\nu}$ satisfy
$
|\omega_n|
\le |\omega_{2^{\nu-1}}|$,
so
$
|\omega_n|^2
\le
2^{2\alpha} \curvenorm{f}^2  2^{-2\nu\alpha }
 $.
 \end{itemize}
It follows that the contribution from the $\nu$th investment level can be estimated as
$$
I_{\nu}^2 
     \le
     (\# \M_{\nu})  \max_{n\in \M_{\nu}}  M_{\nu}^{-2K}
|\omega_n|^2
     \le 
   C  \curvenorm{f}^2   N^{-2\rho} 2^{\nu (1+2\rho -2 \alpha )}.
$$
Taking the square root and summing over all $\nu$, we obtain
$$
I_+ 
\le 
C  \curvenorm{f} N^{-\rho}   \sum_{\nu=0}^{\log m^*} 2^{\nu (\frac12+\rho -\alpha)}
\le 
 C  \curvenorm{f} N^{-\rho}    (m^*)^{\frac12+\rho-\alpha}
\le  
C  \curvenorm{f} N^{\frac12-\alpha}.
$$ 
The second inequality follows because $\rho>\alpha-1/2$, while the last one uses (\ref{terminal_index}).
This together with the estimate of the tail $I_-$ yields the error estimate in Theorem \ref{thm-main}.
\end{proof}
%

%
%
\subsection{Approximation of cartoon functions}\label{sec=cartoon}
If $f= \sum_{n=1}^{\infty} \omega_n \gamma_n$ is in the cartoon class, 
then $|\omega_n |\le C n^{-3/2}|\log n|^{3/2}$, and 
simple thresholding gives the $N$-term curvelet approximation rate of $N^{-1} (\log N)^{3/2}$.

If we apply the  Gaussian approximation scheme $\T_N$ with cost function (\ref{cost})
and with $\rho := K \beta > 1$, then we have the following error analysis:
the tail (obtained by thresholding coefficients at $m^*\sim N$) 
can be estimated by the $N$-term thresholding result
\begin{equation*}
I_-     
\le
 C N^{-1} |\log N|^{3/2}.
\end{equation*}
The principal part
is again estimated with the help of (\ref{principal_estimate}): 
namely $I_+ \le \varTheta  \sum_{\nu=1}^{\lfloor \log_2 N\rfloor} I_{\nu}$.
As before, 
$I_{\nu}^2=  \sum_{n\in \M_{\nu}} 
     M_{\nu}^{-2K}
     |\omega_{n}|^2$  
is estimated by controlling 
$\# \M_{\nu}$ and $\max_{n\in \M_{\nu}} M_{\nu}^{-2K} |\omega_n|^2$. 

Again, $\# \M_{\nu} =2^{{\nu}-1}$, $M_{\nu}^{-2K} \le C N^{-2\rho} 2^{2\nu \rho} $ and
$$
|\omega_n|
\le |\omega_{2^{\nu}}|
\quad \Rightarrow
\quad
|\omega_n|^2
\le 
C 2^{-3\nu} | \nu|^{3}.
$$
From this, we have
$$
I_{\nu}^2 
\le 
\# \M_{\nu}  \max_{n\in \M_{\nu}} ( M_{\nu}^{-2K} |\omega_n|^2 )
\le 
C N^{-2\rho}  2^{{\nu(2\rho-2 )}}  | \nu|^3.
$$
Because $\rho>1$, it follows that 
$ I_+ 
\le 
\varTheta 
\sum_{\nu=1}^{\log_2 m^*} I_{\nu}
\le C N^{-\rho} (m^*)^{\rho-1} |\log m^*|^{3/2} 
\le CN^{-1} |\log N|^{3/2}.$
Adding the estimate for $I_-$ gives
$$\|f - \T_N f\|_2\le I_- + I_+ \le C N^{-1} |\log N|^{3/2}.$$

\smallskip
\begin{remark}
We note that the 
sparsity condition $|\omega_n |\le C n^{-3/2}|\log n|^{3/2}$
does not characterize the cartoon functions.
These latter functions satisfy additional conditions, for example
each cartoon $f$ is bounded; this implies that, for such $f$,
any coefficient from level $j$
satisfies 
$$|\omega_{\gamma} |= |\langle f, \gamma\rangle| \le \|f\|_{\infty} \|\gamma\|_1 \sim 2^{-\frac{3j}{4}} \|f\|_{\infty}.$$
Thus for  coefficients $\omega_{\gamma}$  from dyadic level $j\ge  2\log N$, we have
$$|\omega_{\gamma}| \lesssim 2^{-3j/4} \le N^{-3/2} \le N^{-3/2}  (\log N)^{3/2} \le (m^*)^{-3/2} (\log m^*)^{3/2},$$
and, hence, $\gamma$ belongs to the tail (and will not be approximated). 
As pointed out in \cite[p.242]{CD}, the rearrangement of the curvelet expansion 
 may be done relatively efficiently; 
the large coefficients can be found in a relatively shallow tree of depth $\mathcal{O}( \log N)$. \end{remark}

%
%
\section{Individual curvelet approximation}
\label{sec:individualerror}

We prove in this section Lemma \ref{lemma-1}, which treats the error in approximating the curvelet generators. 
The curvelet generators are band-limited Schwartz functions, satisfying
 (for some constants $0<A<B<\infty$) that for all $j\in\Nn$,
 $\mathrm{supp}(\widehat{\gam^{(j)}}) \subset [-B,B]^2$ 
 and  that, for $j\ge 4$,
 $\gam^{(j)}(\xi_1,\xi_2) =0$ whenever $|\xi_1|<A$ (this is Lemma \ref{support}).
We obtain an $M$-term Gaussian approximation by employing a truncated
``semi-discrete convolution'' 
as considered and analyzed in \cite{HR}.
However, applying directly  those results
is insufficient for our needs here:
the error estimates in the frequency domain  asserted in
Lemma \ref{lemma-1} require the Gaussian approximant 
to satisfy analytic conditions beyond those that can be deduced
from the analysis in \cite{HR}.
In particular, the error must have algebraic decay in frequency 
and, for $j\ge 1$, a ``directional vanishing moment'';
cf.\ (\ref{one_term_rotation_error_mod}).
The latter condition means that 
the function $(\xi_1,\xi_2)\mapsto \widehat{\EE_{j,M}}(\xi_1,\xi_2)$
must have a zero of prescribed order along the line $\xi_1=0$.

Although the main objective of this section is the proof of Lemma \ref{lemma-1}, which treats the approximation
of the curvelet generators (a bivariate result), it is natural to do so by 
studying a slightly more general problem: semi-discrete approximation of band-limited functions in $\Rr^d$.

%
\paragraph{Band-limited Schwartz functions}
If $\Gamma\subset \Rr^d$ is compact,
consider the  subspace $\B_{\Gamma}\subset \S(\Rr^d)$
$$\B_{\Gamma}:=\{f\in \S(\Rr^d)\mid \supp{\widehat{f}}\subset \Gamma\}$$ 
(this is simply the image of $C_{\Gamma}^{\infty}(\Rr^d)$
under the inverse Fourier transform).
This is a closed ideal with respect to convolution.
The topology of this subspace can be defined via the family of
seminorms 
 $\varrho_k(f):= \max_{|\gamma|\le k} \| D^{\gamma} \widehat{f}\|_{L_{\infty}(\Rr^d)}$.

\paragraph{Fourier multipliers on $\B_{\Gamma}$}
If $\tau\in \S(\Rr^d)$, then $\tau*:\S(\Rr^d)\to \S(\Rr^d):f\mapsto \tau*f$ 
is continuous.
 This idea can be generalized in a number of ways. We make use of the following, which can be 
 proved by standard arguments:
%
\begin{lemma}\label{deconvolution}
If $U$ is a neighborhood of $\Gamma$ and $\widehat{\tau}\in C^{\infty}(U)$, 
then $\tau*:\B_{\Gamma}\to \B_{\Gamma}:f\mapsto \tau*f$ is continuous; indeed for every $K$
there is $C$ so that  for all $f\in \B_{\Gamma}$,
$\varrho_K(\tau*f)\le C  \varrho_K(f)$.
Furthermore, if $\widehat{\tau}$ is nonzero on $\Gamma$, 
then $\tau*:\B_{\Gamma}\to \B_{\Gamma}$ 
is a Fr{\' e}chet space automorphism, and  for every $K$ there is $C<\infty$ 
so that for all $f\in \B_{\Gamma}$, $\varrho_K(f)\le C\varrho_K(\tau*f)$.
\end{lemma}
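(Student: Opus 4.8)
The plan is to work entirely on the Fourier side, where convolution becomes multiplication and the statement reduces to a mapping property of the multiplier $\widehat{\tau}$ acting on $C_\Gamma^\infty(\Rr^d)$. Since $\widehat{f}$ is supported in $\Gamma$, and $\Gamma$ is compact, I only ever need to control $\widehat{\tau}$ on the compact set $\Gamma$ (or a neighborhood $U$ of it): multiplication against the unbounded tails of $\widehat{\tau}$ outside $U$ never enters, because $\widehat{\tau}\cdot\widehat{f} = \widehat{\tau}\cdot\widehat{f}$ and the second factor vanishes there. First I would fix a cutoff $\chi\in C_c^\infty(U)$ with $\chi\equiv 1$ on a neighborhood of $\Gamma$, so that $\widehat{\tau}\,\widehat{f} = (\chi\widehat{\tau})\,\widehat{f}$ for every $f\in\B_\Gamma$; the function $\psi:=\chi\widehat{\tau}$ is then a genuine element of $C_c^\infty(\Rr^d)$, hence has bounded derivatives of every order. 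This localization is the device that lets me quote only the behavior of $\widehat\tau$ on $U$.

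For the continuity estimate $\varrho_K(\tau*f)\le C\varrho_K(f)$, the key step is the Leibniz rule: since $\widehat{\tau*f}=\widehat\tau\,\widehat f=\psi\,\widehat f$, each derivative $D^\gamma(\psi\,\widehat f)$ with $|\gamma|\le K$ is a finite sum of terms $\binom{\gamma}{\delta}D^{\gamma-\delta}\psi\,D^\delta\widehat f$. Taking $L_\infty$ norms and using $\|D^{\gamma-\delta}\psi\|_\infty\le C_{K,\psi}$ bounds every term by a constant times $\|D^\delta\widehat f\|_\infty\le\varrho_K(f)$, and the number of terms depends only on $K$ and $d$. This gives the first assertion with $C$ depending on $K$, $U$, and the derivatives of $\widehat\tau$ on $U$.

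For the automorphism statement, the point is that if $\widehat\tau$ is nonzero on $\Gamma$, then by continuity it is bounded away from $0$ on a smaller neighborhood of the compact set $\Gamma$, so $1/\widehat\tau$ is well defined and $C^\infty$ there; multiplying by a suitable cutoff produces $\sigma\in\S(\Rr^d)$ with $\widehat\sigma=1/\widehat\tau$ on a neighborhood of $\Gamma$, and then $\sigma*$ is a two-sided inverse of $\tau*$ on $\B_\Gamma$, since $\widehat\sigma\,\widehat\tau\,\widehat f=\widehat f$ for all $f\in\B_\Gamma$. Applying the continuity estimate already proved, now to $\sigma$ in place of $\tau$, yields $\varrho_K(f)=\varrho_K(\sigma*(\tau*f))\le C\varrho_K(\tau*f)$, which is the reverse bound. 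Thus $\tau*$ is a continuous bijection with continuous inverse, i.e.\ a Fr\'echet space automorphism.

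The only mild obstacle I anticipate is bookkeeping rather than mathematics: one must check that $\sigma$ can genuinely be realized as a Schwartz function whose transform agrees with $1/\widehat\tau$ near $\Gamma$ (handled by the cutoff, since values of $\widehat\sigma$ away from $\Gamma$ are irrelevant on $\B_\Gamma$), and that the constants in the two directions are uniform over $f\in\B_\Gamma$ for each fixed $K$ — which they are, because every estimate depends only on finitely many sup-norms of derivatives of the fixed functions $\psi$ and its reciprocal analogue. These are exactly the ``standard arguments'' the statement alludes to.
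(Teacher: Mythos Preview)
Your proposal is correct and is precisely the ``standard argument'' the paper alludes to (the paper does not actually supply a proof of this lemma, merely noting that it can be proved by standard arguments). The localization by a cutoff $\chi\in C_c^\infty(U)$, the Leibniz-rule estimate for $\varrho_K$, and the construction of the inverse multiplier $\widehat{\sigma}$ via $1/\widehat{\tau}$ on a neighborhood where it is bounded away from zero are exactly the expected ingredients, and your observation that all constants depend only on finitely many sup-norms of derivatives of the fixed multipliers confirms the required uniformity in $f$.
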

%

%

%
%
\paragraph{The semi-discrete convolution}
Consider the functions band-limited in a ball $\Gamma = B_r(0)$, writing
$\H_r:=\B_{B_r(0)}$ for short.
For $F\in\H_r$, Lemma \ref{deconvolution} guarantees existence of $f_{F}\in \H_r$ 
so that Gaussian deconvolution $F=\phi*f_F $ holds. 
(Recall that $\phi(x) =e^{-|x|^2}$.)
Furthermore, for every $K$, there exist constants $0<c<C<\infty$ so that for all $F\in \H_r$,
$c \varrho_K(F) \le \varrho_K(f_F) \le C \varrho_K(F)$.

We approximate $F$ by the semi-discrete convolution $\ful F$, defined as
\begin{equation}\label{full_scheme}
\ful F :=
(2\pi)^{-d} h^d \sum_{\alpha \in h\Zz^d}  f_{F}(\alpha) \phi(\cdot - \alpha).
\end{equation}
From \cite[Proposition 1]{HR}, there exist constants $C$ and $c>0$ so that for $h<\pi/r$ and for
all $F\in \H_r$,
$$
\|F-\ful F\|_{\infty} \le C \|\widehat{F}\|_{L_1(\Rr^d)}
h^d
e^{\mbox{}-\frac{c}{h^2}}
$$
holds.
The constants $c$ and $C$ are global: they depend on $r$, but  are independent of $h$ and $F\in \H_r$.

In section \ref{sec=simple_finite}, we truncate the infinite series (\ref{full_scheme}), 
and study its approximation properties for functions in $\H_r$.
This results in an approximation scheme generated by an operator $\notful$, 
a straightforward modification of the scheme considered in \cite{HR} 
which enjoys rapid decay of the error in frequency.

In section \ref{sec=vm_finite}, we work with functions band-limited in a square, 
but vanishing in a tubular neighborhood of $\Rr^{d-1}\times \{0\}$ (the $\xi_2$-axis when $d=2$). 
In this case, we set $\Gamma  = \{\xi\in [-B,B]^d\mid |\xi_1|\ge A\}$,
and write 
$$
\H_{A,B} := \B_{\Gamma} :=
\left\{f\in \S(\Rr^d)\mid \supp{\widehat{f}\;}\subset [-B,B]^d,\ \widehat{f}(\xi)=0 \text{ if }|\xi_1|\le A\right\}.
$$
%

%
%
\subsection{The scheme $\notful$: approximation using finitely many centers}\label{sec=simple_finite}

In order to get a finite linear combination of Gaussians,
we approximate $F$ by $\notful F$,
where
\begin{equation}
\notful  F:= (2\pi)^{-d} h^d \sum_{\alpha\in h\Zz^d }\sigma(h \alpha) f_{F}(\alpha)\phi(\cdot-\alpha),
\end{equation}
with $f_{F}$ and $\phi$ as in the previous subsection,   
and
$\sigma:\Rr^d\to [0,1]$ is a smooth cut-off function equaling $1$ in the unit ball and supported in  
ball of radius 2.

There are a few points to make.
First, the approximation properties of $\ful F$ are inherited by $\notful F$;
removing from the sum all shifts outside  $B_{{2}{h^{-1}}}(0)$ has  little effect because 
of the rapid decay of the coefficients and the uniform boundedness of $\phi$.
Second, the smooth truncation ensures we preserve, to some extent, 
the localization
of the Fourier transform of $\ful F$.
Finally, if $h<1$,
the number of shifts 
$n:=n(h) = \# ( h\Zz^d \cap B_{{2}{h^{-1}}}(0))$ 
that are being used for a given value of
$h\le 1$ satisfies 
\begin{equation}\label{cardinality_estimate}
a_d h^{-2d}\le n\le b_d h^{-2d}\end{equation}
for some $d$-dependent\footnote{
The value of $n$ is equal to the number of integer lattice points in the interior of  a ball of radius $2/h^2$.
For $d=1$, (\ref{cardinality_estimate}) holds with $b_1=4$ and for $d=2$, 
(\ref{cardinality_estimate}) holds with $b_2=36$.
} 
constants $a_d$ and $b_d$. 
%

%
%
%
%
%
\paragraph{Fourier transform of ${\notful  F}(x)$}  \label{truncated_FT}
Apply the Poisson summation formula to obtain
\begin{eqnarray}
\widehat{\notful  F}(\xi)
&:=&
h^d\widehat{\phi}(\xi)(2\pi)^{-d}
  \sum_{\alpha\in h\Zz^d}\sigma(h \alpha) f_{F}(\alpha)e^{-i\langle\alpha, \xi\rangle} \nonumber\\
&=&
\widehat{\phi}(\xi)(2\pi)^{-d}
  \sum_{\beta \in 2\pi\Zz^d/h}\bigl(\sigma(h \cdot ) f_{F}\bigr)^{\wedge}(\xi+\beta) \nonumber\\
&=&
\widehat{\phi}(\xi)(2\pi)^{-d}
  \sum_{\beta \in 2\pi\Zz^d/h}\left(\widehat{\sigma(h\cdot)}* 
  \frac{\widehat{F}}{\widehat{\phi}}\right)(\xi+\beta).
\label{notful_fourier_formula}
\end{eqnarray}
From this, we have the following result.
%
\begin{lemma}\label{Flat_fourier_decay}
For each $K>0$ there is $C>0$ so that for all $F\in \H_r$, 
$$
|\widehat{F} (\xi) - \widehat{\notful  F}(\xi)|
\le
C\varrho_{K}(F) h^K \left(1+|\xi| \right)^{-K} .
$$
\end{lemma}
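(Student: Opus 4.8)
The plan is to carry out the entire estimate on the Fourier side, starting from the Poisson-summation identity (\ref{notful_fourier_formula}). Writing $g:=\widehat{f_F}=\widehat{F}/\widehat{\phi}$, which is smooth and supported in $B_r(0)$, and setting $G_h:=\sigma(h\cdot)f_F$ so that $(2\pi)^{-d}\,\widehat{\sigma(h\cdot)}*g=\widehat{G_h}$, the formula (\ref{notful_fourier_formula}) reads $\widehat{\notful F}(\xi)=\widehat{\phi}(\xi)\sum_{\beta\in 2\pi\Zz^d/h}\widehat{G_h}(\xi+\beta)$. Since $\widehat{F}=\widehat{\phi}\,g$, I would split the error into the diagonal ($\beta=0$) contribution and the aliasing tail:
$$\widehat{F}(\xi)-\widehat{\notful F}(\xi)=\widehat{\phi}(\xi)\bigl(g(\xi)-\widehat{G_h}(\xi)\bigr)-\widehat{\phi}(\xi)\sum_{\beta\in 2\pi\Zz^d/h,\ \beta\neq 0}\widehat{G_h}(\xi+\beta),$$
and estimate the two pieces separately, exploiting throughout the Gaussian decay $\widehat{\phi}(\xi)=\pi^{d/2}e^{-|\xi|^2/4}$, which dominates every polynomial and supplies the factor $(1+|\xi|)^{-K}$ essentially for free.

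For the diagonal term I would observe that $g-\widehat{G_h}=\bigl((1-\sigma(h\cdot))f_F\bigr)^{\wedge}$, and that $(1-\sigma(h\cdot))f_F$ is supported in $\{|x|\ge 1/h\}$, whence $|g(\xi)-\widehat{G_h}(\xi)|\le\int_{|x|\ge 1/h}|f_F(x)|\dif x$. The key input is that $\varrho_{K'}(F)$ controls the spatial decay of $f_F$ of order $K'$: since $g=\widehat{f_F}$ is smooth and compactly supported, integration by parts gives $|x^{\gamma}f_F(x)|\le C\|D^{\gamma}g\|_{L_1(B_r)}\le C\varrho_{|\gamma|}(f_F)$, and by the Gaussian deconvolution equivalence (Lemma \ref{deconvolution}) $\varrho_{|\gamma|}(f_F)\le C\varrho_{|\gamma|}(F)$. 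Taking $K'$ large, the tail integral is $\mathcal{O}(h^{K'-d})$, producing the required power of $h$; multiplying by $\widehat{\phi}(\xi)$ and using $\widehat{\phi}(\xi)\le C_K(1+|\xi|)^{-K}$ yields a bound of the form $C\varrho_{K'}(F)\,h^{K}(1+|\xi|)^{-K}$ for this piece.

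For the aliasing tail I would first record, using $\supp g\subset B_r(0)$ and $\widehat{\sigma(h\cdot)}(\eta)=h^{-d}\widehat{\sigma}(\eta/h)$ with $\widehat{\sigma}$ Schwartz, the decay $|\widehat{G_h}(\zeta)|\le C_N\varrho_0(f_F)\,h^{N-d}|\zeta|^{-N}$, valid for $|\zeta|\ge 2r$ and every $N$. Then I would split on the size of $\xi$. If $|\xi|\le\pi/h$, every $\beta\neq 0$ satisfies $|\xi+\beta|\ge 2\pi/h-|\xi|\ge\pi/h\ge 2r$, so summing the lattice bound over $\beta\in 2\pi\Zz^d/h$ contributes an extra $h^{N}$, and the whole term is $\le C\,h^{2N-d}\widehat{\phi}(\xi)\le C\,h^{K}(1+|\xi|)^{-K}$ once $2N-d\ge K$. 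If instead $|\xi|>\pi/h$, I would discard the (at most polynomially large) aliasing sum and use only $\widehat{\phi}(\xi)\le\pi^{d/2}e^{-|\xi|^2/4}$; writing $e^{-|\xi|^2/4}=e^{-|\xi|^2/8}e^{-|\xi|^2/8}$ and invoking $|\xi|>\pi/h$, the first factor produces $(1+|\xi|)^{-K}$ and the second produces $h^{K}$ (a Gaussian in $1/h$ beats any power), again giving $C\,h^{K}(1+|\xi|)^{-K}$.

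Combining the two estimates gives the claim; the seminorm index is absorbed because the assertion only requires, for each $K$, the existence of some constant $C$, so one may work with a seminorm of order $K'\gtrsim K+d$ and invoke $\varrho_{K'}(f_F)\asymp\varrho_{K'}(F)$. The step I expect to be the main obstacle is the aliasing sum, which must be controlled uniformly in $\xi$: the case split above is precisely what makes the compact Fourier support of $g$ (forcing $|\xi+\beta|$ to be large) cooperate with the two distinct decay mechanisms of $\widehat{\phi}$ — polynomial decay for moderate $\xi$ and super-exponential decay in $1/h$ for large $\xi$. The diagonal term, by contrast, is a routine spatial-truncation estimate.
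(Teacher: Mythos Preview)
Your plan is essentially the paper's: split via (\ref{notful_fourier_formula}) into the diagonal term and the aliasing tail, then handle the tail by a case split on $|\xi|\lessgtr\pi/h$. The aliasing analysis is correct (in fact the sum is uniformly bounded in $\xi$, not merely polynomially large, but either way the Gaussian factor wins).

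The one issue is your diagonal estimate and the final remark about the seminorm index. Bounding $|g(\xi)-\widehat{G_h}(\xi)|$ by $\int_{|x|\ge 1/h}|f_F|\,\dif x$ and then invoking $|f_F(x)|\le C\varrho_{K'}(f_F)\,|x|^{-K'}$ yields at best $C\,\varrho_{K+d}(F)\,h^{K}$, not $C\,\varrho_{K}(F)\,h^{K}$; and your claim that the mismatch is ``absorbed'' into $C$ is wrong, because $\varrho_{K+d}(F)/\varrho_{K}(F)$ is unbounded on $\H_r$. The lemma as stated requires the same $K$ in the seminorm, in the power of $h$, and in the power of $(1+|\xi|)$. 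The paper's ``standard arguments'' recover exactly this by exploiting that $\sigma\equiv 1$ on $B_1(0)$, so $D^{\alpha}\sigma(0)=0$ for $|\alpha|\ge 1$ and hence all higher moments of $\widehat{\sigma}$ vanish: writing $g-\widehat{G_h}=g-(2\pi)^{-d}\widehat{\sigma(h\cdot)}*g$ as a mollification error and Taylor-expanding $g$ to order $K$ gives $\|g-\widehat{G_h}\|_{\infty}\le C\,\varrho_K(f_F)\,h^{K}$ with no loss in the seminorm index. For the application to curvelet generators (Lemma~\ref{lemma-1}) your weaker bound would in fact suffice, since Lemma~\ref{schwartz_bound} controls all $\varrho_K(\gam^{(j)})$ uniformly in $j$; but as a proof of Lemma~\ref{Flat_fourier_decay} as stated, this step needs the moment-condition argument.
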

\begin{proof}
We   estimate the Fourier transform of the above error expression. By  (\ref{notful_fourier_formula}) 
we have
\begin{eqnarray}
|\widehat{F}(\xi) - \widehat{\notful  F}(\xi)|
&\le&
|\widehat{\phi} (\xi)|
\left|
  \widehat{f_{F}}(\xi)
  -
  \bigl(\sigma(h \cdot) f_{F}\bigr)^{\wedge}(\xi)
\right|\nonumber\\
&&
+\left|\widehat{\phi}(\xi)\right|
\sum_{\substack{\beta \in 2\pi\Zz^d/h\\\beta\ne 0}}
  |\bigl(\sigma(h \cdot ) f_{F}\bigr)^{\wedge}(\xi+\beta)|.\label{FT_error}
\end{eqnarray}
Note that 
$\widehat{\sigma(h\cdot)}(\xi) = h^{-d}\widehat{\sigma} (\xi/h)$, and
 $\bigl(\sigma(h \cdot) f_{F}\bigr)^{\wedge}(\xi) = h^{-d} f_F* \widehat{\sigma} (\cdot/h)$.
 So by standard arguments we have 
$
\left\|\widehat{f_{F}} -   \bigl(\sigma(h \cdot) f_{F}(\cdot)\bigr)^{\wedge}  \right\|_{\infty} 
\le 
C\varrho_K(f_{F}) h^K 
$, which
we apply to the first term in (\ref{FT_error}).
Thus, we obtain, for all $\xi\in \Rr^2$,
$$
|
\widehat{\phi}(\xi)|\left|\widehat{f_{F}}(\xi) -
  \bigl(\sigma(h \cdot) f_{F}\bigr)^{\wedge}(\xi)
\right| 
\le 
C\varrho_K(f_F) h^K
 |\widehat{\phi}(\xi)|
 \le
 C \varrho_K(F) h^K
 (1+|\xi|)^{-K},
$$
since $\varrho_K(f_F) \le C \varrho_K(F)$ and $\widehat{\phi}$ 
is rapidly decaying.

As a Schwartz function, $\sigma$ satisfies $|\widehat{\sigma}(\xi)|\le C_K (1+|\xi|)^{-K}$
for all $K>0$.
It follows that
$|\widehat{\sigma(h\cdot)}(\xi) |\le C_K h^{-d}(1+\frac{|\xi|}{h})^{-K}$.
Thus, for each $K>0$ there is a constant $C$ so that for all $\xi\in\Rr^2$
\begin{equation}\label{square_fourier_estimate}
\left| \bigl(\sigma(h \cdot) f_{F}\bigr)^{\wedge}(\xi)\right|
 =
\left|
\left(
\widehat{\sigma(h\cdot)}* \widehat{f_{F}}
\right)(\xi)\right| 
\le
C
\|\widehat{f_{F}}\|_{\infty}
\left(1 +
\frac
{\dist(z,B_r)}
{h}
\right)^{-K},
\end{equation}
since $\dist(\xi,B_r) \le \dist(\xi, \supp{\widehat{f_F}})$.
This estimate implies  the uniform bound
$$
\sum_{\beta \in 2\pi\Zz^d/h}
  \left({\sigma(h\cdot)f_F}\right)^{\wedge}(\xi+\beta)
  \le
  C
  \|\widehat{F}\|_{\infty}.
$$
 For $|\xi|\ge \pi/h$, the Gaussian satisfies
 $|\widehat{\phi}(\xi)| \le C_K h^K (1+|\xi|)^{-K}$, so 
$$
 \left|\widehat{\phi}(\xi)\right|
\sum_{\substack{\beta \in 2\pi\Zz^d/h\\\beta\ne 0}}
  |\bigl(\sigma(h \cdot ) f_{F}\bigr)^{\wedge}(\xi+\beta)|
  \le 
  C_K  \|\widehat{F}\|_{\infty} h^K (1+|\xi|)^{-K}.
$$
 For $|\xi| \le {\pi/h}$,
 the sum over $2\pi \Zz^d\setminus \{0\}$ can be estimated by  (\ref{square_fourier_estimate}) 
 and working with annuli:
\begin{equation}\label{off_center_fourier_estimate}
\sum_{\substack{\beta \in 2\pi\Zz^d/h\\ \beta \ne 0}}
\left(1 +\frac{ \dist(\xi+\beta,B_r)  }{h}  \right)^{-K}
\le
C \sum_{m=1}^\infty m^{d-1}\left(1+\frac{m \pi}{h^2}\right)^{-K}
\le
C_K h^{2K}.
\end{equation}
Thus, for every $\xi$,
$$
\left|\widehat{\phi}(\xi)\right|
\sum_{\substack{\beta \in 2\pi\Zz^d/h\\\beta\ne 0}}
    \left|\bigl(\sigma(h \cdot) f_{F}\bigr)^{\wedge}(\xi+\beta)\right|
\le
C  \|\widehat{F}\|_{\infty}
h^K  \left(1+|\xi|\right)^{-K}
$$
holds and the lemma follows.
\end{proof}

%
%
\subsection{Recovering  vanishing moments}\label{sec=vm_finite}

In this section, we recover the vanishing moments of a Gaussian approximant. 
For the purposes of Lemma  \ref{lemma-1}, we would pick $J=2$, 
but having more vanishing moments is not a challenge. 
The results in this section show that vanishing moments of any finite order can be obtained.

Let $\varDelta:\mathcal{S}(\Rr^d)\to \mathcal{S}(\Rr^d)$ 
be the $J$-fold symmetric difference in the horizontal direction whose
symbol is
$$\Xi(\xi) := \left( \sin (\xi_1/\kappa)\right)^J$$
with $\kappa>0$.
Thus, $\varDelta f =\sum_{k=0}^J c_k f(\cdot+(2k-J)\vec{e_1}/\kappa )$ is a combination of $J+1$ 
shifts of $f$ along the first coordinate, with spacing $2/\kappa$. 

There is a $(J,\kappa)$-dependent constant $C$ so that  
$\rho_K(\varDelta f) \le C \rho_K(f)$, for all $f\in \S(\Rr^d)$.
On the other hand, for sufficiently large $\kappa$,
Lemma \ref{deconvolution} guarantees that the operator $\varDelta$ is a Fr{\'e}chet space automorphism of 
$\H_{A,B}\subset \mathcal{S}(\Rr^d)$.
(We note that  $\Xi(\xi)$ vanishes when $\xi_1=0$, but this does not pose a problem, since $A > 0$.)

We construct a Gaussian approximant on $\H_{A,B}$ as follows: 
for $F\in \H_{A,B}$, let $F_0 =\varDelta^{-1}F\in \H_{A,B}$ 
(so $F= \varDelta F_0$).
Then
$$T_h F :=\varDelta \notful F_0.$$
In other words, $T_h = \varDelta \notful \varDelta^{-1}$ is obtained by conjugating by $\varDelta$.
Because $\notful$ is a combination of $n(h) = \# \{\alpha\in h\Zz^d \mid |\alpha|\le 2/h\}$ 
translates of $\phi$, $T_h F$ uses at most 
$(J+1)\times n(h)$ elements. 

Regarding the decay of the approximant in the spatial as well as in the frequency domain, 
we get the following result.

\begin{lemma} \label{lem-1a}
For every $J$ and $K\in \Nn$ there exists a constant $C>0$ 
such that  for all $h > 0$ the following inequality holds for all $F\in \H_{A,B}$ and all $\xi \in \Rr^2$:
\begin{align}
|\widehat{F}(\xi) -  \widehat{T_h F}(\xi)| 
&\leq 
C\varrho_{K}(F) h^K \min\left(1,|\xi_1|^{J}\right) (1+|\xi|)^{-K}
\label{eq:errorfrequency}.
\end{align}
\end{lemma}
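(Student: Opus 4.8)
The plan is to exploit the conjugation structure $T_h = \varDelta \notful{}\varDelta^{-1}$ together with the fact that, on the Fourier side, $\varDelta$ acts simply as multiplication by its symbol $\Xi(\xi) = (\sin(\xi_1/\kappa))^J$. Setting $F_0 := \varDelta^{-1}F$, so that $F = \varDelta F_0$ and $T_h F = \varDelta \notful{F_0}$, the error factors cleanly:
\[
\widehat{F}(\xi) - \widehat{T_h F}(\xi) = \Xi(\xi)\bigl(\widehat{F_0}(\xi) - \widehat{\notful{F_0}}(\xi)\bigr).
\]
This is the whole point of conjugating by $\varDelta$: the vanishing moment in \eqref{eq:errorfrequency} is manufactured by the multiplier $\Xi$, while the frequency decay comes from the already-analyzed truncated scheme. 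Thus I would reduce the proof to bounding the two factors separately.

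For the second factor I would first note that $F_0 \in \H_{A,B} \subset \H_r$ with $r = \sqrt{d}\,B$, since $[-B,B]^d$ lies in the ball of that radius; hence Lemma \ref{Flat_fourier_decay} applies to $F_0$ and gives, for each $K$, a constant $C$ with $|\widehat{F_0}(\xi) - \widehat{\notful{F_0}}(\xi)| \le C\,\varrho_K(F_0)\,h^K (1+|\xi|)^{-K}$, uniformly in $h$ and in $F_0$. To convert $\varrho_K(F_0)$ into $\varrho_K(F)$ I invoke Lemma \ref{deconvolution}: because $A>0$ and $\kappa$ is taken large enough that $\Xi$ is bounded away from zero on $\Gamma = \{\xi\in[-B,B]^d : |\xi_1|\ge A\}$, the operator $\varDelta$ is a Fr\'echet automorphism of $\H_{A,B}$, so $\varrho_K(F_0) = \varrho_K(\varDelta^{-1}F) \le C\,\varrho_K(F)$.

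It then remains to estimate the multiplier itself, which is elementary: from $|\sin t|\le 1$ and $|\sin t|\le|t|$ I get $|\Xi(\xi)| = |\sin(\xi_1/\kappa)|^J \le \min\bigl(1,(|\xi_1|/\kappa)^J\bigr) \le C_{\kappa,J}\,\min(1,|\xi_1|^J)$. Multiplying the two estimates and absorbing constants yields precisely \eqref{eq:errorfrequency}, with $C$ inheriting its uniformity in $h$ and $F$ from Lemma \ref{Flat_fourier_decay}. I do not expect a genuine obstacle in this argument; its entire content is the factorization displayed above. The one point that genuinely requires care—and which the hypotheses $A>0$ and ``$\kappa$ sufficiently large'' are designed to secure—is the well-definedness of $\varDelta^{-1}$ on $\H_{A,B}$, i.e.\ that $\Xi$ does not vanish on $\Gamma$; this has already been dispatched in the setup preceding the lemma, so the remaining steps are routine bookkeeping.
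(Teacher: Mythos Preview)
Your proof is correct and follows essentially the same approach as the paper's: factor the error as $\Xi(\xi)\bigl(\widehat{F_0}(\xi)-\widehat{\notful{F_0}}(\xi)\bigr)$, bound the second factor via Lemma~\ref{Flat_fourier_decay}, convert $\varrho_K(F_0)$ to $\varrho_K(F)$ using the Fr\'echet automorphism property from Lemma~\ref{deconvolution}, and bound $|\Xi(\xi)|\le C\min(1,|\xi_1|^J)$. Your version is slightly more explicit (noting the inclusion $\H_{A,B}\subset\H_r$ needed to apply Lemma~\ref{Flat_fourier_decay}), but the argument is the same.
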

\begin{proof} 
We have
$|\widehat{F}(\xi) -  \widehat{T_h F}(\xi)| = |\Xi(\xi)| |\widehat{F_0}(\xi) - \widehat{\notful F_0}(\xi)|$.
Since $ |\Xi(\xi)|\le C\min\left(1,|\xi_1|^{J}\right),$  Lemma \ref{Flat_fourier_decay} ensures that
$$
|\widehat{F}(\xi)-  \widehat{T_h F}(\xi)| \le  C\min\left(1,|\xi_1|^{J}\right)h^K(1+|\xi|)^{-K}\varrho_{K}(F_0) .
$$
 Lemma \ref{deconvolution} gives $\varrho_{K}(F_0) \le C \varrho_{K}(F) $, 
and (\ref{eq:errorfrequency}) follows.
\end{proof}

%
%
\subsection{Proof of Lemma \ref{lemma-1}}
\begin{proof}
For the case $j<4$, select $h$ so that $b_2 h^{-4}=M$, where $b_d$ is the constant in (\ref{cardinality_estimate}).
Then $\G_{j, M}:=\notful \gam^{(j)}$ is a linear combination of fewer than $M$ Gaussians.  
Applying Lemma \ref{Flat_fourier_decay} in conjunction with Lemma \ref{schwartz_bound} gives 
$|\widehat{\EE_{j,M}}(\xi)| 
\le 
C\varrho_{4K}(\gam^{(j)}) h^{4K} \left(1+|\xi| \right)^{-4K}
\le 
C_K  
M^{-K} 
\left(  1+  |\xi|\right)^{-K}$.

For $j\ge4$, we select $h$ so that $b_2 h^{-4}=M/(J+1)$. In that case, we use the operator $T_h$ introduced
in section \ref{sec=vm_finite}, with $J=2$. 
The approximant $\G_{j, M}:=T_h \gam^{(j)}$ uses
at most $M$ points.
Inequality (\ref{one_term_rotation_error_mod}) 
follows from Lemma \ref{lem-1a} combined with the uniform bound in Lemma \ref{schwartz_bound}.
\end{proof}
\begin{remark}
We note that the minimal positive investment for a general curvelet 
by a Gaussian 
approximant with $J = 2$ vanishing moments is $N_0 = J+1 = 3$.
\end{remark}

%
%
\section{The Bessel property} 
\label{sec:bessel}

As a last remaining task, we have to prove Lemma \ref{lemma-2}. 
In this section, we prove a slight generalization (from which the lemma follows). 
Namely, that
\begin{equation} \label{eq:systemoferrors}
  \Derr = 
  \left\{{\psi}_{\gamma} := U_{\gamma} \Ps_{j(\gamma)}\mid \gamma \in \Dcurv, 
  \gamma=U_{\gamma} \gam^{(j(\gamma))}\right\}
\end{equation}
is a Bessel system when the generators $ \Ps_{j}\in L_2(\Rr^2)$ satisfy the bounds
\begin{align}
|\widehat{\Ps_{j}}(\xi)|&\le C \min(|\xi_1|^J,1) (1+|\xi|)^{-K}, & j \geq 4, \label{eq:errorboundJK}  \\
|\widehat{\Ps_{j}}(x)|&\le C (1+|\xi|)^{-K}, & j <4,\label{eq:errorbound_space} 
\end{align}
with $J>1$ and $K>2$  (and $C<\infty$ independent of $j$).

Throughout this section, we frequently make use of the convenient notation $\psi_{j,\ell,k}$ to denote 
$\psi_{\gamma}\in \Derr$ 
when
$\mu(\gamma) = (j,\ell,k)$.

\subsection{Generalized shift invariant systems}
We first review some pertinent details about shift-invariant (SI) systems. 
Our sources in this regard are \cite{RS}, \cite{LWW} and \cite{CR}.
A GSI (generalized SI) system, defined, say,
on $\Rr^2$, is generated by a (finite or countable) set $(\varphi_i)_{i\in I}$ of 
$L_2(\Rr^2)$-functions, each associated with a 2D lattice $\Upsilon_i$. 
The GSI system is 
$$X:=\{\varphi_i(\cdot-k),\quad i\in I,\ k\in \Upsilon_i\}.$$
In order to analyze the Bessel (or another related) property of the system,
one associates it with its  {\it dual Gramian kernel}. 
To this end, recall that the {\it dual lattice} $\hat{\Upsilon}$  of a lattice  $\Upsilon$ is defined as
$$
\hat{\Upsilon}:=\{k \in \Rr^2: \langle k,\Upsilon \rangle \subset 2\pi \Zz\}.
$$

\begin{definition} \label{def:dualgramian} 
The dual Gramian kernel $\fiber_{X}$ is
(formally) defined on $\Rr^2\times \Rr^2$ as follows:
$$\fiber_{X}(\xi,\tau):=\sum_{i\in I} \delta_i(\xi-\tau) \frac{\hat{\varphi}_i(\xi)
\overline{\hat{\varphi}_i(\tau)}}{|\Upsilon_i|},$$
with $\delta_i(\xi)=1$ on $\hat{\Upsilon}_i$, and $=0$ otherwise. 
$|\Upsilon_i|$ denotes the area of the fundamental domain of $\Upsilon_i$.
\end{definition}

The theory of SI spaces is based on {\it fiberization}:
the art of associating the system $X$ with constant coefficient Hermitian matrices (`fibers') 
whose entries are sampled from  the dual Gramian kernel, and characterizing properties of $X$ via
the uniform satisfaction of the analogous properties over (almost) all the fibers. 
A row in a given fiber corresponds to some $\xi\in \Rr^2$, and a column to  $\tau\in \Rr^2$, 
with the $(\xi,\tau)$-entry being $\fiber_{X}(\xi,\tau)$. 
Therefore, the $\ell_1$-norm of the $\xi$-row of any fiber is bounded by
\begin{equation*} 
\fiber_{X,\ast}(\xi):=\sum_{\tau\in \Rr^2}|\fiber_{X}(\xi,\tau)|.
\end{equation*}
(Note that $\fiber_{X}(\xi,\cdot)$ may assume non-zero values only on 
$\bigcup_{i\in I} (\xi+\hat{\Upsilon}_i)$
which is a countable set.) 
Therefore, the fibers must be (essentially) uniformly bounded, whenever
$$\| \fiber_{X,\ast}\|_{\infty} <\infty:$$
the above boundedness implies that fibers are uniformly bounded in $\ell_\infty$, 
and since the fibers are Hermitian, 
it follows then that they are uniformly bounded in the requisite $\ell_2$-norm. 
The GSI theory then concludes that the  system $X$ is Bessel.

The above argument applies whenever the fiberization technique is available;
however, not every GSI system is `fiberizable', \cite{RS}. On the other hand, the condition 
\begin{equation} \label{eq:l1boundfibers}
\| \absfiber_{X,\ast}\|_\infty<\infty 
\quad \text{for the kernel} \quad 
\absfiber_{X}(\xi,\tau):=\sum_{i\in I} \delta_i(\xi-\tau) \frac{|\hat{\varphi}_i(\xi)|
|\hat{\varphi}_i(\tau)|}{|\Upsilon_i|},
\end{equation}
implies the Bessel property of the GSI system {\it unconditionally}, 
as  the following result makes clear. 
It is taken from Theorem 3.1 of \cite{CR}. 
The proof of this result is essentially due to \cite{LWW} (cf.\ Theorem 3.4 there and its proof).

\begin{result} \label{res:bessel} 
Let $X$ be a GSI system as above, associated with dual  Gramian kernel $\fiber_{X}$. 
If $\absfiber_{X,\ast}$ 
is essentially uniformly bounded, then $X$ is a Bessel system with
Bessel constant $\Theta = \| \absfiber_{X,\ast}\|_\infty.$
\end{result}

\subsection{$\mathcal{D}_{\Ps}$ 
as a generalized shift invariant system}

Denote
$X= \Derr $
and observe that it is a GSI system, whose GSI generators
are the functions 
$\psi_\gamma$ with   $k(\gamma)=0.$
Thus, the index set of the GSI generators is
$$
I
:=
\{i =(j,\ell)\in \Nn^2\mid   0\le  \ell \le 2^{\lfloor j/2\rfloor}-1\}.
$$

A basic member in the system is then given by
$\psi_{\gamma}(x) = {|D_j|}^{1/2} \Ps_j \bigl(D_{j} R_{j,\ell}^*( x - R_{j,\ell}k) \bigr)$, 
so
the translations of a GSI generator for a given index $i=(j,\ell)\in I$ 
comprise a rotated version of the original curvelet grid $\grid_j$. 
More precisely, the GSI shifts for $i=(j,\ell) \in I$ are given on the grid 
$$\Upsilon_i := R_{j,\ell}\grid_j= 2 \pi R_{j,\ell} (D_\delt^{(j)})^{-1} D_{-j} \Zz^2,
$$
 based on the parabolic scaling matrix $D_{-j}$, 
 the rotation $R_{j,\ell}$ and the correction matrix (cf.\ \eqref{eq:grid})
\begin{equation}\label{eq:correctionmatrix}
D_\delt^{(j)} 
= 
\begin{pmatrix}
             \delt_1(j) & 0 \\ 0 &  \delt_2(j)
\end{pmatrix}.
\end{equation}
Thus, the dual lattice is then
\begin{equation}\label{eq:hatUpasilonis}
\hat{\Upsilon}_i=R_{j,\ell}D_\delt^{(j)}D_j\Zz^2.
\end{equation}
The area of the fundamental 
cell of the grid $\Upsilon_i$ is given by
\begin{equation} \label{eq:fundamentalcellarea} 
|\Upsilon_i| =  |\grid_j| = (2 \pi)^2 |(D_\delt^{(j)})^{-1}| |D_{-j}| = \frac{(2 \pi)^2}{\delt_1(j) \delt_2(j) }|D_{-j}|\ge C |D_{-j}|,
\end{equation}
where we have used the uniform boundedness away from $0$ of the correction factors
(see Remark \ref{grid_correction_remark} 
in the appendix). 
That means that, since our goal is to prove that 
$ \absfiber_{X,\ast}$ is essentially bounded,
(so that we can invoke Result \ref{res:bessel}),
we may modify, without loss, the kernel
$\absfiber_{X}$ (from \ref{eq:l1boundfibers}) to be
$$\absfiber_{X}(\xi,\tau):=\sum_{i=(j,\ell)\in I} \delta_i(\xi-\tau) 
|D_j| |\hat{\varphi}_i(\xi)| |\hat{\varphi}_i(\tau)|,$$
as we do in the rest of this section. Here, as before, $j=i(1)$ and $\ell = i(2)$, 
i.e., 
the dilation parameter $j$ and the rotation $\ell$ are encoded in the index $i=(j,\ell)$.
Then,
\begin{equation} \label{eq:FourierErrorFunctiona}
\hat{\varphi}_i(\xi) 
=
\widehat{\psi}_{(j,\ell,0)} (\xi) = \widehat{U_{(j,\ell,0)}\Ps_{j}} (\xi)
= 
 |D_{j}|^{-1/2} \widehat{\Ps}_j (D_{-j} R_{j,\ell}^* \xi). 
\end{equation}
Therefore,
$$\absfiber_{X}(\xi,\tau)=\sum_{(j,\ell)\in I} \delta_{(j,\ell)}(\xi-\tau) 
\bigl| \widehat{\Ps}_j (D_{-j} R_{j,\ell}^* \xi)\bigr| 
\bigl|\widehat{\Ps}_j (D_{-j} R_{j,\ell}^* \tau)\bigr|$$
and we can write
\begin{equation} \label{eq:kernelsimplified} 
\absfiber_{X,\ast}(\xi)=
\sum_{i=(j,\ell)\in I} 
\bigl|\widehat{\Ps}_j (D_{-j} R_{j,\ell}^* \xi)\bigr|\, 
\sum_{\zeta\in \hat{\Upsilon}_i}
\bigl|\widehat{\Ps}_j (D_{-j} R_{j,\ell}^* (\xi-\zeta))\bigr|.
\end{equation}
Thanks to (\ref{eq:hatUpasilonis}), we observe that the second summation here
equals
$$\sum_{k\in \Zz_j}
\bigl|\widehat{\Ps}_j (D_{-j} R_{j,\ell}^* \xi-k)\bigr|,$$
with $\Zz_j:=D_\delt^{(j)}\Zz^2$, hence is bounded by
$$\left\|\sum_{k\in \Zz_j}
\bigl|\widehat{\Ps}_j (\cdot-k)\bigr|\,\right\|_\infty
\le C \sup_{\xi\in \Rr^2} \sum_{k\in \Zz_j}
\big(|1+ |\xi-k|\bigr)^{-K}
.$$ 
This can be estimated as follows: for any $K>2$ 
there is a constant $C_K$ so that for any discrete set $\Xi\subset \Rr^2$,
one has
$$\sup_{\xi\in\Rr^2}\sum_{k\in \Xi}
\big(|1+ |\xi-k|\bigr)^{-K}\le C_K q(\Xi)^{-2},$$ where $q(\Xi):= \min \{|k_1-k_2| : k_1,k_2\in \Xi, k_1\neq k_2\}$
is the minimal separation of $\Xi$.
For $\Xi = \Zz_j$, the estimates (\ref{correction_bounds}) 
ensure that $q(\Zz_j) \ge 4\pi$ for all $j$, and thus
(\ref{eq:kernelsimplified}) leads to the following inequality:
\begin{equation}\label{lem:besselone}
\absfiber_{X,\ast}(\xi)\le C
\sum_{i=(j,\ell)\in I} 
\bigl|\widehat{\Ps}_j (D_{-j} R_{j,\ell}^* \xi)\bigr|. 
\end{equation}

\begin{lemma} \label{prop:Bessel} 
Assuming \eqref{eq:errorboundJK} and 
\eqref{eq:errorbound_space} hold
(for $J>1$ and $K > 2$), the function 
$$\xi\mapsto\sum_{i=(j,\ell)\in I} 
\bigl|\widehat{\Ps}_j (D_{-j} R_{j,\ell}^* \xi)\bigr|$$
is essentially uniformly bounded.
\end{lemma}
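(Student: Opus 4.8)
The plan is to bound the sum in (\ref{lem:besselone}) uniformly in $\xi$ by exploiting the tiling structure of the curvelet frequency supports. First I would dispose of the finitely many low scales $j<4$: using the bound \eqref{eq:errorbound_space}, each such term contributes $\bigl|\widehat{\Ps}_j (D_{-j} R_{j,\ell}^* \xi)\bigr|\le C (1+|D_{-j} R_{j,\ell}^* \xi|)^{-K}$, and since there are only boundedly many pairs $(j,\ell)$ with $j<4$, their total contribution is trivially bounded by a constant (uniformly in $\xi$, since each summand is bounded by $C$). So the essential task is to control
$$
S(\xi):=\sum_{\substack{i=(j,\ell)\in I\\ j\ge 4}} \bigl|\widehat{\Ps}_j (D_{-j} R_{j,\ell}^* \xi)\bigr|.
$$

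For the main estimate I would insert the bound \eqref{eq:errorboundJK}, writing $\eta := D_{-j} R_{j,\ell}^* \xi = (\eta_1,\eta_2)$, so that each summand is controlled by $C\min(|\eta_1|^J,1)(1+|\eta|)^{-K}$. The geometry to exploit is that for fixed scale $j$, as $\ell$ ranges over $\{0,\dots,2^{\lfloor j/2\rfloor}-1\}$, the rotations $R_{j,\ell}$ sweep out angularly separated wedges with angular spacing $\sim 2^{-\lfloor j/2\rfloor}$; and the parabolic scaling $D_{-j}$ compresses $\xi$ anisotropically. The factor $\min(|\eta_1|^J,1)$ is the decisive ingredient: it forces each summand to be small unless $\eta_1 = (D_{-j}R_{j,\ell}^*\xi)_1$ is of order $1$ or larger, which (combined with $(1+|\eta|)^{-K}$ forcing $|\eta|\lesssim 1$) confines $\eta$ to an annular region bounded away from the line $\eta_1=0$. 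Pulling this back through $R_{j,\ell}D_j$, this says $\xi$ must lie in an angular wedge determined by $\ell$ and a radial band determined by $j$ — precisely the support geometry of the curvelets. The plan is then to split $S(\xi)$ into a sum over scales $j$ and, within each scale, a sum over orientations $\ell$, and to show (i) for each fixed $j$, only $\mathcal{O}(1)$ values of $\ell$ give a non-negligible contribution, because the wedges are essentially disjoint and $\xi$ points in one angular direction; and (ii) summing the resulting per-scale bounds over $j\ge 4$ converges geometrically, because the radial bands are dyadically separated and $\xi$ has a fixed magnitude, so only finitely many scales $j$ place $|\eta|$ near the critical region while the rest decay.

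Concretely, within scale $j$, after reducing to the $\mathcal{O}(1)$ relevant orientations, the directional vanishing-moment factor $\min(|\eta_1|^J,1)$ with $J>1$ and the decay $(1+|\eta|)^{-K}$ with $K>2$ together guarantee an integrable/summable profile: the angular sum over $\ell$ behaves like sampling the profile $\theta\mapsto \min(|\cos\theta|^J \cdot r^J_?,1)(1+\cdot)^{-K}$ at spacing $2^{-\lfloor j/2\rfloor}$, and one compares this sum to an integral. The condition $J>1$ is what makes the angular integral converge near $\eta_1=0$, and $K>2$ controls the radial/tail behaviour so the scale sum over $j$ is summable. I would therefore estimate the angular sum at fixed $j$ by a Riemann-sum comparison to $\int \min(|t|^J,1)(1+|t|)^{-K}\dif t$-type integrals, obtaining a scale-independent bound, and then check that the contributions across scales form a convergent geometric-type series.

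I expect the main obstacle to be the angular summation step: making rigorous the claim that the discrete sum over orientations $\ell$ of $\min(|\eta_1|^J,1)(1+|\eta|)^{-K}$ is bounded uniformly in $\xi$ and in $j$. The difficulty is bookkeeping the interaction between the anisotropic scaling $D_{-j}$ (which stretches the first coordinate by $2^{-j}$ and the second by $2^{-\lfloor j/2\rfloor}$) and the rotation $R_{j,\ell}$, and tracking how the first coordinate $\eta_1$ of $D_{-j}R_{j,\ell}^*\xi$ depends on the angle of $\xi$ relative to the orientation $\pi\ell 2^{-\lfloor j/2\rfloor}$. A careful change of variables to polar coordinates for $\xi$, parametrizing by its angle $\theta$ and radius $r$, should linearize this dependence: one finds $\eta_1 \approx 2^{-j} r\cos(\theta-\theta_{j,\ell})$ and $\eta_2\approx 2^{-\lfloor j/2\rfloor} r\sin(\theta-\theta_{j,\ell})$, so that $\min(|\eta_1|^J,1)(1+|\eta|)^{-K}$ is appreciable only when $r\sim 2^j$ and the angular deviation $|\theta-\theta_{j,\ell}|\lesssim 2^{-\lfloor j/2\rfloor}$, confirming that $\mathcal{O}(1)$ orientations and $\mathcal{O}(1)$ scales contribute. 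Executing this change of variables cleanly, and verifying the two exponent conditions $J>1$, $K>2$ are exactly what is needed for convergence, is the crux of the argument.
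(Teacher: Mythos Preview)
Your plan is sound in outline and correctly identifies why the hypotheses $J>1$ and $K>2$ are needed; the paper, however, organizes the estimate quite differently. After disposing of $j<4$ exactly as you do, the paper does not attempt to separate the orientation sum from the scale sum or to argue via a Riemann-sum comparison. Instead it reduces everything to bounding the \emph{star-norm} (adapted from de Hoop--Gr\"ochenig--Romero \cite{dHGR}) of the single majorant $\eta_{J,K}(\xi):=\min(|\xi_1|^J,1)(1+|\xi|)^{-K}$, namely $\|\eta_{J,K}\|_*:=\sup_\xi\sum_{j,\ell}\eta_{J,K}(D_{-j}R_{j,\ell}^*\xi)$. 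The frequency plane is carved into dyadic sectors $V_{s,t}$ (indexed dyadically in radius by $s$ and in $|\cos\theta|$ by $t$), and a combinatorial lemma shows $\|\chi_{V_{s,t}}\|_*\le C\,2^t$: this counts how many pairs $(j,\ell)$ can send a fixed point into $V_{s,t}$. Monotonicity of the star-norm then gives $\|\eta_{J,K}\|_*\le C\sum_{s,t}2^t\max_{V_{s,t}}\eta_{J,K}$, and a short computation shows this sum converges precisely when $J>1$ and $K>2$.

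The advantage of the paper's route is that it handles the angular and radial bookkeeping simultaneously rather than in sequence. One caution about your sketch: the claim that ``only $\mathcal{O}(1)$ orientations contribute at each fixed $j$'' is not quite right. When $\xi$ makes a small angle with the $\eta_1=0$ direction, many orientations can contribute nontrivially---in the paper's accounting this is the factor $2^t$ in the sector bound. What rescues the estimate is that on such a sector the vanishing-moment factor is small, $|\eta_1|^J\sim 2^{(s-t)J}$, and overpowers the $2^t$ multiplicity; this is exactly where $J>1$ enters. Your Riemann-sum idea can be pushed through, but making it rigorous will force you into a case analysis in $(r,\theta)$ that is equivalent to the $(s,t)$ sector decomposition; you should expect the angular sum at fixed $j$ not to be uniformly $\mathcal{O}(1)$ in $\xi$, but rather to interact with the scale sum in the way the sector lemma encodes.
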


%

Assuming Lemma \ref{prop:Bessel}, 
we can finish the proof of the Bessel property of $\Derr$.

\begin{proof}[Proof of Lemma \ref{lemma-2}]

The uniform  boundedness of $\absfiber_{X,\ast}$ 
is a consequence of
(\ref{lem:besselone}) and
 the uniform boundedness of 
$\xi\mapsto\sum_{i=(j,\ell)\in I} 
\bigl|\widehat{\Ps}_j (D_{-j} R_{j,\ell}^* \xi)\bigr|,$
which is asserted in Lemma \ref{prop:Bessel}.
Result \ref{res:bessel}  then
guarantees
that $\Derr$ is a Bessel system. 
\end{proof}

\subsection{Proof of Lemma \ref{prop:Bessel}} 

%
\begin{proof}[Proof of Lemma \ref{prop:Bessel}]
Note that 
it suffices to estimate  the tail
$\sum_{\substack{(j,\ell)\in I, j\ge 4}} 
\bigl|\widehat{\Ps}_j (D_{-j} R_{j,\ell}^* \xi)\bigr|
$
(since the number of terms we omit in this way is finite, and each is bounded).
The  terms  in this sum are subject to the bound \eqref{eq:errorboundJK}.
Lemma \ref{prop:Bessel} is a consequence of Lemma \ref{lem=star_lemma}, which we prove below.
\end{proof}

We define the functions 
$$\eta_{J,K}(\xi) := \min(|\xi_1|^J,1) (1+|\xi|)^{-K}.$$
These are essentially the bounding functions for the generators $\widehat{\Ps_{j}}(\xi)$, $j \ge 4$ 
introduced in \eqref{eq:errorboundJK}. 
The next lemma applies to  these functions
the star-norm,
 $\|F\|_*:=\sup_{\xi \in \Rr^2} 
\sum_{j} 
\sum_{\ell = 0}^{2^{\lfloor j/2\rfloor}-1} 
  |F(D_{-j} R_{j,\ell}^*\xi)| $, which was introduced in \cite[(2.1)]{dHGR}.
%
\begin{lemma} \label{lem=star_lemma}
If $J>1$, $K > 2$, and $\eta_{J,K}(\xi)=\min\left(1,|\xi_1|^{J}\right) (1+|\xi|)^{-K}$, then
\[ \|\eta_{J,K}\|_* 
= 
\sup_{\xi \in \Rr^2} 
\sum_{j \in \Nn} 
\sum_{l = 0}^{2^{\lfloor j/2\rfloor}-1} 
  |\eta_{J,K}(D_{-j} R_{j,l}^*\xi)| 
< 
\infty. 
\]
\end{lemma}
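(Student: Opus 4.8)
The plan is to bound the star-norm $\|\eta_{J,K}\|_* = \sup_{\xi} \sum_{j\in\Nn} \sum_{\ell=0}^{2^{\lfloor j/2\rfloor}-1} |\eta_{J,K}(D_{-j}R_{j,\ell}^*\xi)|$ by analyzing, for each fixed $\xi$, how the parabolically-scaled and rotated copies of the fixed profile $\eta_{J,K}$ are distributed. First I would fix $\xi\neq 0$ (the origin contributes nothing since $\eta_{J,K}(0)=0$) and write $\xi = r(\cos\theta,\sin\theta)$. The point $D_{-j}R_{j,\ell}^*\xi$ has first coordinate governed by $2^{-j}$ times a trigonometric factor and second coordinate governed by $2^{-\lfloor j/2\rfloor}$, so the key is to understand for which scales $j$ and orientations $\ell$ the argument $D_{-j}R_{j,\ell}^*\xi$ lands in a region where $\eta_{J,K}$ is not negligible. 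The factor $\min(1,|\cdot_1|^J)$ together with the polynomial decay $(1+|\cdot|)^{-K}$ means the profile is concentrated near the $\xi_2$-axis but vanishes to order $J$ on it, so both the radial scale and the angular separation must be controlled.

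The natural decomposition is to split the sum over $j$ according to the size of $2^{-j}r$, i.e.\ whether the scaled radius $|D_{-j}R_{j,\ell}^*\xi|$ is large or small. For scales $j$ with $2^{-j}r \gg 1$ (coarse scales relative to $r$), the argument has large norm and the $(1+|\cdot|)^{-K}$ factor with $K>2$ gives geometric decay in $j$, summable after also summing the roughly $2^{\lfloor j/2\rfloor}$ orientations — here I would check that the $K>2$ margin beats the orientation count $2^{j/2}$. For scales with $2^{-j}r$ of order one or smaller, I would use the directional vanishing factor $\min(1,|\xi_1|^J)$: only those orientations $\ell$ for which $R_{j,\ell}^*\xi$ is nearly parallel to $\xi$ keep the first coordinate large, and the angular widths $\pi 2^{-\lfloor j/2\rfloor}$ control how many $\ell$'s contribute at each scale. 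The exponent $J>1$ is precisely what makes the angular sum of $|(\text{first coordinate})|^J$ convergent, since summing $(\ell 2^{-j/2})^{-J}$-type quantities over $\ell$ converges when $J>1$.

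The single most important tool is that this is exactly the setup of the star-norm from \cite{dHGR} applied to parabolic molecules, so the entire content reduces to verifying that $\eta_{J,K}$ satisfies the decay and directional-vanishing hypotheses their estimate requires; I would either cite their Lemma directly with $J>1$, $K>2$ or reproduce the short almost-orthogonality bookkeeping. The hard part will be the regime of intermediate scales where neither the radial decay nor the directional vanishing alone suffices: there one must trade off the two factors simultaneously, splitting the orientation sum into a \emph{near-axis} block (where $\min(1,|\xi_1|^J)=|\xi_1|^J$ is active and the exponent $J>1$ secures summability across $\ell$) and a \emph{bulk} block (where $|\xi_1|\gtrsim 1$ so the cap is in force and the polynomial factor $(1+|\cdot|)^{-K}$ with $K>2$ carries the convergence). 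Once each block is summed uniformly in $\xi$ and $j$, summing the resulting geometric series in $j$ yields a finite bound independent of $\xi$, which is the assertion.
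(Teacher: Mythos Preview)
Your proposal identifies the right reference (\cite{dHGR}) and the right qualitative mechanism (directional vanishing versus radial decay), but the concrete ``direct analysis'' you sketch is both different from the paper's proof and contains a genuine gap.

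\textbf{What the paper actually does.} The paper does \emph{not} fix $\xi$ and split the sum over $(j,\ell)$ into scale regimes. Instead it decomposes the \emph{target} domain of $\eta_{J,K}$ into dyadic polar sectors
$V_{s,t}^k$ with $|\xi|\sim 2^s$ and $|\cos\theta|\sim 2^{-t}$, and proves (Lemma~\ref{lem:sectorinequality}, a sharpening of \cite[Lemma~2.1]{dHGR}) that each indicator satisfies $\|\chi_{V_{s,t}^k}\|_*\le 9\cdot 2^t$. By monotonicity and subadditivity of the star-norm,
\[
\|\eta_{J,K}\|_*\le C\sum_{s\in\Zz}\sum_{t\ge 0} 2^t\,\max_{V_{s,t}^0}|\eta_{J,K}|,
\]
and the right-hand side is then evaluated case-by-case in $(s,t)$. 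All of the parabolic geometry is packaged into the single sector bound $\|\chi_{V_{s,t}}\|_*\le C\,2^t$; the profile $\eta_{J,K}$ enters only through its sector maxima. This cleanly decouples the counting from the decay and is why the convergence conditions $J>1$ and $K>J$ (after reducing WLOG to $J<K$) emerge as separate summability conditions on the $(s,t)$ double series.

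\textbf{The gap in your direct analysis.} Your claim that for coarse scales (those with $2^{-j}r\gg 1$) ``the $(1+|\cdot|)^{-K}$ factor with $K>2$ gives geometric decay in $j$, summable after also summing the roughly $2^{\lfloor j/2\rfloor}$ orientations'' is not correct as stated. Using only the radial decay and the crude lower bound $|D_{-j}R_{j,\ell}^*\xi|\ge 2^{-j}r$ (or even the sharper $\gtrsim 2^{-j/2}r$ valid for most $\ell$), the level-$j$ contribution is of order $2^{j/2}(2^{-j/2}r)^{-K}=2^{j(K+1)/2}r^{-K}$, and summing over $j\le 2\log_2 r$ gives a bound of order $r$, not a uniform constant. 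The radial decay alone never beats the orientation count uniformly in $r$. What actually controls the coarse and intermediate scales is the \emph{simultaneous} use of both factors: at every scale, the orientations with $|\cos\phi_\ell|$ small have small first coordinate (so $|\xi_1|^J$ is active), while those with $|\cos\phi_\ell|$ large have first coordinate $\gtrsim 2^{-j}r|\cos\phi_\ell|$ contributing to the norm. A dyadic split on $|\cos\phi_\ell|\sim 2^{-t}$ is needed at \emph{every} scale, not just the fine ones; carrying this out correctly reproduces exactly the $(s,t)$ double sum the paper obtains via the sector decomposition. Your placement of the $J>1$ condition (``summing $(\ell 2^{-j/2})^{-J}$-type quantities over $\ell$'') is also off: the role of $J>1$ in the paper's proof is the convergence of $\sum_t 2^{t(1-J)}$, which arises from the \emph{negative}-$s$ sectors (small radius, near the $\xi_2$-axis), not from an orientation sum at fixed scale.

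In short: your option ``cite \cite{dHGR}'' is essentially right and is what the paper does; your alternative direct bookkeeping can be made to work but requires the angular dyadic split at all scales, which is precisely the content of the sector lemma you have not invoked.
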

%
 We note that this is a direct adaptation of \cite[Proposition 2.3]{dHGR}, 
 with weaker hypotheses
 on $\eta_{J,K}$. Note also that Lemma 
\ref{lem=star_lemma} implies indeed Lemma \ref{prop:Bessel}.

\begin{proof}[Proof of Lemma \ref{lem=star_lemma}]
Without loss, we can assume $J <K$.
Otherwise, we could replace $J$ by $\tilde{J} = \min(J, 1+K/2)$, and note that
$1<\tilde{J}$, and $\tilde{J} < K$.
Because  $\eta_{J,K} \le \eta_{\tilde{J},K}$,  and the fact that $\|\cdot \|_*$ is increasing 
(meaning that if $F\le G$ then $\|F\|_* \le \|G\|_*$), we have
$\|\eta_{J,K}\|_* \le \|\eta_{\tilde{J},K}\|_*$.

To estimate the star-norm of $\eta_{J,K}$ it is useful to consider circular sectors $V_{s,t}^k$, with $k\in\{0,1,2,3\}$,
$s\in\Zz$ and $t\in \Nn$. For $k=0$, we define
\[ 
V_{s,t}^0
:= 
\left\{ 
  (r \cos \theta, r \sin \theta): 
  \begin{array}{l}
          2^s \leq r \leq 2^{s+1}, \ 0 \leq \theta \leq \pi/2, \\ 
          2^{-t-1} \leq \cos \theta \leq 2^{-t}
   \end{array}
\right\} 
\subset 
\Rr^2
\]
and we use the natural reflections $S_1 (x,y) = (-x,y)$, $S_2(x,y) = (-x,-y)$ and $S_3(x,y) = (x,-y)$
to define $V_{s,t}^k := S_k V_{s,t}^0$ for $k=1,2,3$.

The indicator functions of these sectors satisfy $\| \chi_{V_{s,t}^{k}}\|_* \le C 2^t$ (with $C=9$). This
inequality is a refinement of \cite[Lemma 2.1]{dHGR} and shown in Lemma \ref{lem:sectorinequality} below. 
The monotonicity of the star-norm and the fact that $\eta_{J,K}$ vanishes along the $y$-axis ensures that
$$\|\eta_{J,K} \|_* 
\le 
\left\| \sum_{k=0}^3 \sum_{s \in \Zz} \sum_{t \in \Nn_0} 
  \bigl(\max_{\xi\in V_{s,t}^k} |\eta_{J,K}(\xi)|\bigr)\chi_{V_{s,t}^k}\right\|_*
\le 
\sum_{k=0}^3 \sum_{s \in \Zz} \sum_{t \in \Nn_0}
  \bigl(\max_{\xi\in V_{s,t}^k} |\eta_{J,K}(\xi)|\bigr) \|\chi_{V_{s,t}^k}\|_*,$$
from which the estimate
\[ 
\|\eta_{J,K}\|_* 
\leq 
C \sum_{s \in \Zz} \sum_{t \in \Nn_0} 2^{t} \max_{\xi \in V_{s,t}^0}|\eta_{J,K}(\xi)|
\]
follows.
Based on the particular definition of the function $\eta_{J,K}$, we further have
\[ \max_{\xi \in V_{s,t}^0}|\eta_{J,K}(\xi)| 
\leq 
\begin{cases}
2^{-sK}&s\ge t,\\
(2^{s+1-t})^{J} 2^{-sK}& 0\le s< t,\\
(2^{s+1-t})^{J} &s<0.
\end{cases}\]
This, in combination with the inequality above, yields
\[ \|\eta_{J,K}\|_* 
\leq 
 C   
  \sum_{t \in \Nn_0} 2^t 
    \left(
      \sum_{s =t}^{\infty} 
      2^{  - sK  } 
      +2^{-tJ}\sum_{s =0}^{t} 2^{ (J- K) s }  
      + 2^{-tJ}\sum_{s=1}^{\infty} 2^{-sJ}
    \right). 
  \]
Since $J > 1$ and $K > J $, the series converges.
\end{proof}

\begin{lemma} \label{lem:sectorinequality}
The indicator function for the sector $V_{s,t}^k$ satisfies $\| \chi_{V_{s,t}^k}\|_* \le 9 \times 2^t$.
\end{lemma}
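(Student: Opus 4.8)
The plan is to read the star-norm as a counting problem and then localize the contributing indices in both scale and orientation. Since $\chi_{V_{s,t}^k}$ is an indicator,
\[
\|\chi_{V_{s,t}^k}\|_* = \sup_{\xi\in\Rr^2} \#\{(j,\ell) : D_{-j}R_{j,\ell}^*\xi \in V_{s,t}^k\} = \sup_{\xi\in\Rr^2} \#\{(j,\ell) : \xi \in R_{j,\ell}D_j V_{s,t}^k\}.
\]
Because $V_{s,t}^k = S_k V_{s,t}^0$ and the reflections $S_k$ preserve both the Euclidean distance to the origin and the angular width of a region, the only geometric quantities entering the count below are reflection invariant. I would therefore present the argument for $k=0$, the estimates for $k=1,2,3$ being word-for-word identical; in particular the exact reshuffling of the orientation index $\ell$ under $S_k$ never needs to be addressed.

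First I would pin down the geometry of the dilated sector $D_j V_{s,t}^0$. A point of $V_{s,t}^0$ is $(r\cos\theta, r\sin\theta)$ with $r\in[2^s,2^{s+1}]$ and $\cos\theta\in[2^{-t-1},2^{-t}]$, so that $\sin\theta$ is comparable to $1$. Applying $D_j=\mathrm{diag}(2^j,2^{\lfloor j/2\rfloor})$ yields a region whose horizontal extent concentrates near $2^{j+s-t}$ and whose vertical extent lies in a dyadic band near $2^{\lfloor j/2\rfloor+s}$. Enclosing it in an axis-aligned rectangle (losing only universal constants), its Euclidean distance from the origin is comparable to $2^s P(j)^{1/2}$ with $P(j):=2^{2j-2t}+2^{2\lfloor j/2\rfloor}$, and its angular width seen from the origin is comparable to $2^{-|t-\lceil j/2\rceil|}$.

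The core is then a two-step localization. For the radial step, a rotation preserves length, so $\xi\in R_{j,\ell}D_j V_{s,t}^0$ forces $|\xi|^2$ into an interval of bounded aspect ratio around $2^{2s}P(j)$; since $P$ is increasing and at least quadruples over any two consecutive scales, only a bounded number $C_0$ of scales $j$ can meet this condition for a fixed $\xi$, \emph{uniformly} in $s,t,\xi$. For the orientation step, at an admissible scale the angles $\theta_{j,\ell}=\pi\ell\,2^{-\lfloor j/2\rfloor}$ are equispaced with gap $\pi 2^{-\lfloor j/2\rfloor}$, while the condition $R_{j,\ell}^*\xi\in D_jV_{s,t}^0$ confines $\theta_{j,\ell}$ to an arc of length comparable to the box's angular width $2^{-|t-\lceil j/2\rceil|}$. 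Hence the number of admissible $\ell$ at that scale is at most
\[
\frac{2^{-|t-\lceil j/2\rceil|}}{\pi\,2^{-\lfloor j/2\rfloor}} + 1 = \frac{2^{\lfloor j/2\rfloor-|t-\lceil j/2\rceil|}}{\pi}+1.
\]
A short case check finishes the bound: if $\lceil j/2\rceil\le t$ then $j\le 2t$ gives $2^{\lfloor j/2\rfloor+\lceil j/2\rceil-t}=2^{j-t}\le 2^t$, while if $\lceil j/2\rceil>t$ then $2^{\lfloor j/2\rfloor-\lceil j/2\rceil}\le 1$; in either regime the per-scale count is at most $2^t/\pi+1\le (1+\pi^{-1})2^t$. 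Summing over the $\le C_0$ admissible scales gives a bound of the shape $C_0(1+\pi^{-1})2^t$, which is $\le 9\cdot 2^t$ once $C_0$ is controlled.

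I expect the conceptual mechanism — bounded scales times $O(2^t)$ orientations — to be clean; the main obstacle is the quantitative bookkeeping needed to reach the explicit constant $9$. Concretely, one must pin down $C_0$, the exact number of radial bands that can overlap a fixed radius (two consecutive scales can share the same $\lfloor j/2\rfloor$, so $C_0$ is a genuine small integer that has to be counted, not merely asserted finite), and one must treat the boundary and transition cases — small $j,s,t$ and the crossover $\lceil j/2\rceil\approx t$, where the enclosing box is not thin and the per-scale ``$+1$'' terms accumulate — carefully enough that their sum together with $C_0(1+\pi^{-1})2^t$ stays below $9\cdot 2^t$.
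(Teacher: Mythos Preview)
Your approach is essentially the paper's: the same two-step localization (bounded number of radial scales, then $O(2^t)$ orientations per scale), and your $P(j)$ quadrupling over two scales is exactly the growth inequality $\rho(\alpha/2,j+2)\ge\sqrt{2}\,\rho(\alpha,j)$ that the paper uses for the radial count. The difference is purely in execution: the paper works directly in polar coordinates with the explicit function $\rho(\alpha,j)=\sqrt{\alpha^2 4^j+(1-\alpha^2)4^{\lfloor j/2\rfloor}}$ and bounds the \emph{maximum angle} $\theta'\le 2^{t-\lfloor j/2\rfloor}\pi$ of $D_jV_{s,t}^0$ from the $x$-axis rather than its angular width, which gives the orientation count $\le 2^t$ cleanly (no ``$+1$'') and, together with a direct annulus-overlap argument yielding $j'\le j+4$, produces the constant $9$ without the bookkeeping you anticipate; your box enclosure and angular-width route are correct but make the constants harder to pin down.
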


Note that the proof of Lemma \ref{lem:sectorinequality} is almost identical to the proof of \cite[Lemma 2.1]{dHGR} 
except for a couple of  minor changes, one of which leads to
the improved bound $2^t$ here compared with
$4^t$ in \cite{dHGR}, 

\begin{proof}
Because $\|f\circ S_k\|_* = \|f\|_*$ for each reflection $S_k, k=1,2,3$ 
(a proof is given in \cite[Lemma 2.2]{dHGR}), 
it suffices to prove the result for the sectors in the first quadrant: $V_{s,t}^0$.
We proceed in the same way as in \cite[Lemma 2.1]{dHGR}. 
The dilation matrix $D_j$ applied to a vector $\xi = (r \cos \theta, r \sin \theta)$, $r > 0$, $0 \leq \theta < \pi/2$, 
in the first quadrant can be described as $D_j \xi = \xi' = (r' \cos \theta', r' \sin \theta')$ with 
the new polar coordinates $(r',\theta')$ determined by
\begin{align*}
r' &= \rho(\cos \theta, j) r \\
\tan \theta' &= 2^{-\lfloor j/2 \rfloor} \tan \theta.
\end{align*}
Here, the function $\rho: (0,1] \times \mathbb{N} \to \Rr_+$
is given by
\[\rho(\alpha,j) := \sqrt{\alpha^2 4^j + (1-\alpha^2) 2^{2 \lfloor j/2 \rfloor}}.\]
It is monotone in both arguments, 
and furthermore satisfies the inequality
\begin{equation} \label{eq:rhogrowth}
\rho \left(\frac{\alpha}{2},j+2\right) 
\geq 
\sqrt{2}  \rho \left(\alpha,j\right) 
.
\end{equation}
If $\xi \in V_{s,t}^0$,
 we get the following bounds for the values $r'$ and $\theta'$:
\begin{align}
& 2^s \rho(2^{-(t+1)},j) \leq r' \leq 2^{s+1}\rho(2^{-t},j) \label{eq:circsec1} \\
& \theta' \leq \tan \theta' 
= 
2^{- \lfloor j/2 \rfloor } \tan \theta 
\leq 
2^{- \lfloor j/2 \rfloor } \frac{\theta}{\cos \theta} 
\leq 
2^{- \lfloor j/2 \rfloor } 2^{t} \pi. \label{eq:circsec2}
\end{align}
Therefore, $D_j$ maps the sector $V_{s,t}^0$ into a circular sector
of angle $2^{\lfloor -j/2 \rfloor } 2^{t} \pi$ and radius bounded by \eqref{eq:circsec1}. 
In particular, at most $2^{t}$ of the rotated
sectors $R_{j,\ell} D_j V_{s,t}^0$ contain a given  point $\xi$
(in other words, if $\ell> 2^t$, then $R_{j,\ell}$ maps the $x$-axis $\theta=0$ 
to the line $\theta = \pi {\ell} 2^{-\lfloor j/2\rfloor}> 2^{\lfloor -j/2 \rfloor } 2^{t} \pi $, and, hence, 
$R_{j,\ell} D_j V_{s,t}^0 \cap  D_j V_{s,t}^0 = \emptyset$). This allows us to obtain the bound
\[
\sum_{j=0}^{\infty}  \sum_{\ell = 0}^{2^{\lfloor j/2\rfloor}-1} 
  |\chi_{V_{s,t}^0}(D_{-j} R_{j,\ell}^*\xi)| 
= 
\sum_{j=0}^{\infty} \sum_{\ell = 0}^{2^{\lfloor j/2\rfloor}-1} 
  |\chi_{ R_{j,\ell} D_j V_{s,t}^0}(\xi)| 
\leq 
  2^t \sum_{j=0}^{\infty} 
   |\chi_{ C_{j,s,t}}(\xi)|,
\]
where $C_{j,s,t}$ denotes the annulus
\[ C_{j,s,t} = \{\xi \in \Rr^2 \ | \ 2^s \rho(2^{-(t+1)},j) \leq |\xi| \leq 2^{s+1}\rho(2^{-t},j)\}.\]
If
$C_{j,s,t} \cap C_{j',s,t} \neq \emptyset$ and $j < j'$, then the inequality \eqref{eq:rhogrowth} give
\[
2^s \rho(2^{-(t+1)},j') \leq 2^{s+1} \rho(2^{-t},j) \leq 2^{s} 
\rho(2^{-t-2},j+4) \leq 2^{s} 
\rho(2^{-(t+1)},j+4).
\]
(The fact that the inner radius of $C_{j',s,t}$ is less than the outer radius of $C_{j,s,t}$ gives the first inequality.
The second inequality follows form  two applications of (\ref{eq:rhogrowth}).
The third inequality follows from the fact that $\rho(\cdot, j+4)$ is increasing.)
This estimate, together with the fact that $\rho(2^{-(t+1)},\cdot)$ is increasing, implies that $j' \leq j + 4$. 
Consequently, a point $\xi$ can be an element of at most $9$ different sets 
$C_{j,s,t}$, 
and
\[ \| \chi_{V_{s,t}^0}\|_* 
\leq 
\sup_{\xi\in \Rr^2}
2^t \sum_{j=0}^{\infty} 
   |\chi_{ C_{j,s,t}}(\xi)|
\leq
9 \times2^t.\]
\end{proof}

\appendix

%
%
\section{Appendix: Curvelets}\label{append} 
Let $w_0$ and $w$ be two univariate functions that satisfy
$|w_0(r)|^2+\sum_{j=1}^{\infty} |w(2^{-j} r)|^2 = 1$ 
where $w$ is supported on $[\frac{2\pi}{3},\frac{8\pi}{3}]$ 
and $w_0$ is supported on $[0,\frac{8
\pi}{3}]$.
In addition, let  $\nu\in C_c^{\infty}(\Rr)$ be
supported in $[-\pi,\pi]$, and  satisfy
$\sum_{\ell\in\Zz} |\nu(\theta - \pi \ell)|^2 = 1$. 
With $\xi=(r\cos\theta,r\sin\theta)\in \Rr^2$,
we define a sequence $(\chi_j)_{j\in\Nn}$ of bivariate functions
by
$\chi_0(\xi) := w_0(r),$
$\chi_1(\xi) := w_1(r/2),$
and
$$\chi_{j} (\xi)
:= 
w(2^{-j}r)
\bigl[\nu(2^{\lfloor j/2\rfloor} \theta) +\nu\bigl(2^{\lfloor j/2\rfloor}(\theta-\pi)\bigr)\bigr],\text{ for }j\ge 2.
$$
For $j\ge 2$, each $\chi_j$ is supported in the wedge
$$
\left\{\xi\in \Rr^2 
\mid 
\frac{2\pi}{3}2^{j} \le r \le \frac{8\pi}{3}2^{j}, \ |\theta |\le 2^{-\lfloor j/2\rfloor} {\pi}\text{ or }
|\theta - \pi |\le 2^{-\lfloor j/2\rfloor}{\pi} \right\},$$
and we have
$\sum_{j=0}^{\infty} \sum_{\ell =0}^{2^{\lfloor j/2\rfloor}-1} |\chi_{j}(R_{j,\ell}^*\xi)|^2 = 1$.
In particular, for Fourier transform
$f\mapsto \widehat{f}$ defined (for integrable functions) 
as $\widehat{f}(\xi)= \frac{1}{(2\pi)^{d/2}} \int_{\Rr^d}f(x)e^{-ix\cdot \xi} \dif x$,
we have
\begin{equation}\label{square_pou}
\|f\|_{L_2}^2 = \sum_{j=0}^{\infty} \sum_{\ell =0}^{2^{\lfloor j/2\rfloor}-1} 
\int_{\Rr^2}
|\widehat{f} (\xi)\chi_{j}(R_{j,\ell}^*\xi)|^2\dif \xi. 
\end{equation}

\begin{figure}[ht]
\centering
 \includegraphics[height=2.5in]
  {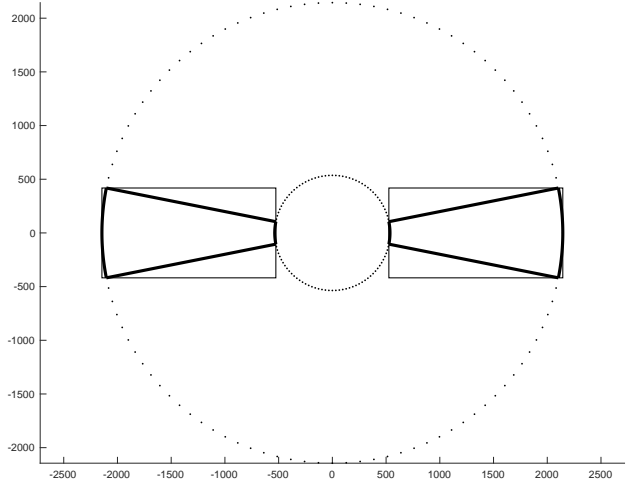}
  \caption{The support of $\widehat{\gamma_{j,0,0}}$ for $j=8$ (with thick boundary), contained in $I_j$.}\label{curvelet_pic}
  \end{figure}

An exercise in planar geometry shows that for $j\ge 2$, ${\rm supp}\,\chi_j$ 
is contained in the union of disjoint rectangles 
$$
I_j 
:= 
     \left(
        \left[ -\frac{8\pi}{3}2^j,-t_j\right]
        \times 
        \left[-\frac{\lambda_j}{2},\frac{\lambda_j}{2}
        \right]
     \right)
\cup
    \left(
      \left[t_j,\frac{8\pi}{3}2^j\right]
      \times 
      \left[-\frac{\lambda_j}{2},\frac{\lambda_j}{2}\right]
    \right)
$$
where 
$\frac{\lambda_j}{2} := 2^j \frac{8\pi}{3}\sin(2^{-\lfloor j/2\rfloor}{\pi})$ and
$t_j := 2^{j}\frac{2\pi}{3}\cos(2^{-\lfloor j/2\rfloor} {\pi})$.
Furthermore, we set 
$\Lambda_j:= 2(\frac{8\pi}{3}2^j-t_j)$, so that
$t_j +\frac{\Lambda_j}{2} := \frac{8\pi}{3}2^j$ (this is illustrated in Figure \ref{curvelet_pic} below). 
It follows that $\lambda_j$ is the total height of the support of $\chi_j$  and $\Lambda_j$ is the total width.
Also, $\chi_0$ is supported on the square $[-\frac{8\pi}3,\frac{8\pi}{3}]^2$,
 and $\chi_1$ is supported on $[-\frac{16\pi}{3},\frac{16\pi}{3}]^2$.
 For notational convenience, 
 we define 
$\Lambda_0:=\lambda_0:= \frac{16\pi}{3}$
and 
$\Lambda_1:=\lambda_1:= \frac{32\pi}{3}$.

The $j$-dependent grids are then defined as
$\grid_{j} :=  \{\left(\frac{k_1}{\Lambda_j},\frac{k_2}{\lambda_j}\right)\mid k\in 2\pi \Zz^2\}$. 
%
%
\begin{remark}\label{grid_correction_remark}
For $j\in \Nn$, define  $\delt_1(j) := \Lambda_j 2^{-j} $ and 
$\delt_2(j) := \lambda_j {2^{-\lfloor j/2\rfloor}}$. Define the grid correction
matrix $D_{\delt}^{(j)}$ as 
$D_{\delt}^{(j)}: = \begin{pmatrix} \delt_1(j)&0\\0&\delt_2(j)\end{pmatrix}$.
Then 
$
\begin{pmatrix} \Lambda_j &0\\0&\lambda_j\end{pmatrix} 
= 
D_j 
D_{\delt}^{(j)}
$, and we have 
$$\grid_j =  2\pi (D_{\delt}^{(j)})^{-1} D_j^{-1} \Zz^2.$$

Note that, for $j\ge 2$, $\delt_1(j) = \frac{4\pi}{3}[ 4 - \cos(2^{-\lfloor j/2\rfloor}{\pi})]$, 
and  $\delt_2(j) = {2^{-\lfloor j/2\rfloor}}[\frac{16\pi}{3} 2^j \sin (2^{-\lfloor j/2\rfloor} \pi)]$.
This implies, indeed, that $\delt_\ell$, $\ell=1,2$, are uniformly bounded
from above and away from $0$: The estimates
 \begin{equation}\label{correction_bounds}
 \begin{array}{lclc} 
 4\pi &\le \delt_1(j) &\le \frac{16\pi}{3}, \\
\frac{32\pi}{3}&\le \delt_2(j) &\le \frac{32\pi^2}{3}.
\end{array}
\end{equation}
hold.
\end{remark}

\subsubsection*{Curvelets} 
The curvelet $\gamma$ with $\mu(\gamma) =(j,\ell, k)$, with $j\in \Nn$, $\ell \in \Nn \cap [0,2^{\lfloor j/2\rfloor}-1]$
and $k\in \grid_j$, is defined as
$$
\gamma:=\gamma_{j,\ell, {k}}(x) 
:= 
\frac{1}{\sqrt{\Lambda_j\lambda_j}} (\mathcal{F}^{-1} \chi_j)(R_{j,\ell}^*x -  {k}).
$$
Note that if $j=0$, then $\ell = 0$. 
If we set $\gam^{(j)} := |D_j|^{-1/2}\gamma_{j,0, {0}} (D_{-j} \cdot)$,
then equation (\ref{eq:unitary_gam}), namely
$\gamma_{j,\ell, {k}} = |D_j|^{1/2} \gam^{(j)}(D_j(R_{j,\ell}^* \cdot - k))$,
follows automatically.
Indeed, we have 
\begin{equation}\label{generator_fourier_transform}
\widehat{\gam^{(j)}}(\xi) =
|D_j|^{1/2} \widehat{\gamma_{j,0,0}}(D_j \xi) =  \sqrt{\frac{|D_j|}{\Lambda_j \lambda_j}}\chi_j ( D_j\xi).
\end{equation}

At this point, we define
$$
\Dcurv := 
 \{\gamma_{0,0,k}\mid k\in \grid_0\}
 \cup 
  \{\gamma_{1,0,k}\mid k\in \grid_1\}
 \cup
\left\{
  \gamma_{j,\ell,k}\mid j\ge 2, \ell\in \{0,\dots ,2^{\lfloor j/2\rfloor}-1\}, k\in \grid_j
\right\}
.
$$

From this we have the following two results.
%
\begin{lemma}\label{support}
Let $A := \frac{\sqrt{2}\pi}{3}$ and  $B := \frac{16\pi^2}{3}$.
For all $j$,
$\mathrm{supp}(\widehat{\gam^{(j)}})  \subset  [-B,B]^2$. For $j\ge 4$,
$\widehat{\gam^{(j)}}(\xi)= 0 $ when $|\xi|\le A$.
\end{lemma}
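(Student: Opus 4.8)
The plan is to read the support of $\widehat{\gam^{(j)}}$ directly off the identity (\ref{generator_fourier_transform}), which gives $\widehat{\gam^{(j)}}(\xi) = \sqrt{|D_j|/(\Lambda_j\lambda_j)}\,\chi_j(D_j\xi)$; consequently $\mathrm{supp}(\widehat{\gam^{(j)}}) = D_{-j}\,\mathrm{supp}(\chi_j)$, and the whole lemma reduces to tracking how the anisotropic dilation $D_{-j} = \mathrm{diag}(2^{-j},2^{-\lfloor j/2\rfloor})$ rescales the (already described) support of $\chi_j$.

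For the first assertion (containment in $[-B,B]^2$), I would dispatch $j=0,1$ by hand, using that $\chi_0$ is supported in $[-\tfrac{8\pi}3,\tfrac{8\pi}3]^2$ and $\chi_1$ in $[-\tfrac{16\pi}3,\tfrac{16\pi}3]^2$, together with $D_0 = \mathrm{Id}$ and $D_1 = \mathrm{diag}(2,1)$; both resulting boxes sit inside $[-B,B]^2$ since $\tfrac{16\pi}3 \le \tfrac{16\pi^2}3 = B$. For $j\ge 2$, I would use the enclosing rectangles $\mathrm{supp}(\chi_j)\subset I_j$. Applying $D_{-j}$, the horizontal extent contracts by $2^{-j}$, so $|\xi_1|\le 2^{-j}\cdot\tfrac{8\pi}3 2^j = \tfrac{8\pi}3\le B$, while the vertical extent contracts by $2^{-\lfloor j/2\rfloor}$, giving $|\xi_2|\le 2^{-\lfloor j/2\rfloor}\tfrac{\lambda_j}2 = 2^{\lceil j/2\rceil}\tfrac{8\pi}3\sin(2^{-\lfloor j/2\rfloor}\pi)$. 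The elementary estimate $\sin x\le x$ then yields $|\xi_2|\le 2^{\lceil j/2\rceil-\lfloor j/2\rfloor}\tfrac{8\pi^2}3\le 2\cdot\tfrac{8\pi^2}3 = B$, since $\lceil j/2\rceil - \lfloor j/2\rfloor\in\{0,1\}$. This is precisely where the value $B = \tfrac{16\pi^2}3$ originates.

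For the second assertion ($j\ge4$, vanishing when $|\xi|\le A$), I would switch to the \emph{wedge} description of $\mathrm{supp}(\chi_j)$ rather than the enclosing rectangle, since now I need a sharp lower bound on $|\xi_1|$. A point $\zeta = (r\cos\theta, r\sin\theta)\in\mathrm{supp}(\chi_j)$ on the right half satisfies $r\ge\tfrac{2\pi}3 2^j$ and $|\theta|\le 2^{-\lfloor j/2\rfloor}\pi$; writing $\xi = D_{-j}\zeta$, the first coordinate obeys $\xi_1 = 2^{-j} r\cos\theta \ge \tfrac{2\pi}3\cos(2^{-\lfloor j/2\rfloor}\pi)$. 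For $j\ge 4$ one has $\lfloor j/2\rfloor\ge 2$, so $2^{-\lfloor j/2\rfloor}\pi\le\pi/4$ and $\cos(2^{-\lfloor j/2\rfloor}\pi)\ge\tfrac{\sqrt2}2$, giving $|\xi_1|\ge\tfrac{2\pi}3\cdot\tfrac{\sqrt2}2 = \tfrac{\sqrt2\pi}3 = A$ on the support (the left half $|\theta-\pi|\le 2^{-\lfloor j/2\rfloor}\pi$ is handled identically by reflection). Hence $|\xi|\ge|\xi_1|\ge A$ on $\mathrm{supp}(\widehat{\gam^{(j)}})$, which already yields vanishing for $|\xi|<A$.

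The step I expect to need the most care—and the only genuinely delicate point—is the boundary case $|\xi| = A$, required to obtain the closed inequality $|\xi|\le A$ in the statement. Here I would argue that equality $|\xi| = |\xi_1| = A$ forces simultaneously $\xi_2 = 0$ (i.e. $\theta = 0$) together with the minimizing configuration $\theta = 2^{-\lfloor j/2\rfloor}\pi$, $r=\tfrac{2\pi}3 2^j$; these are incompatible, and moreover along $\theta = 0$ one has $\xi_1 = 2^{-j}r\ge\tfrac{2\pi}3 > A$. Thus $|\xi| = A$ is unattainable on the support, so in fact $|\xi| > A$ strictly there, and $\widehat{\gam^{(j)}}$ vanishes on the whole closed disk $\{|\xi|\le A\}$, completing the proof.
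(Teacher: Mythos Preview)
Your proposal is correct and follows essentially the same route as the paper: both proofs rest on the identity $\mathrm{supp}(\widehat{\gam^{(j)}}) = D_{-j}\,\mathrm{supp}(\chi_j)$ from (\ref{generator_fourier_transform}), and then read off $B = \max_j 2^{-\lfloor j/2\rfloor}\tfrac{\lambda_j}{2}$ and $A = \min_{j\ge 4} 2^{-j} t_j$ from the construction of $\chi_j$. The paper's proof is simply a terse summary of the computation you have written out in full (your wedge argument for $A$ recovers exactly $2^{-j}t_j = \tfrac{2\pi}{3}\cos(2^{-\lfloor j/2\rfloor}\pi)$), and your boundary-case analysis for $|\xi|=A$ is more careful than what the paper bothers to record.
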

%

%
\begin{proof}
This follows from (\ref{generator_fourier_transform}); namely, the fact that
$\mathrm{supp}(\widehat{\gam^{(j)}})  = D_{-j}\, \mathrm{supp}(\chi_j )$,
which, together with some of the construction details before,
gives $B = \max_{j} 2^{-\lfloor j/2\rfloor} \frac{\lambda_j}{2}$ and
$A = \min_{j\ge 4} 2^{-j} t_j$.
\end{proof}
%
%
%
%
%
\begin{lemma}\label{schwartz_bound}
For each multi-index $\alpha =(\alpha_1,\alpha_2)$, there is $C_{\alpha}$ 
so that for all $j\ge 4$
$$\|D^{\alpha}\widehat{\gam^{(j)}}\|_{\infty} \le C_{\alpha}.$$
\end{lemma}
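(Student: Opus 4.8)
The plan is to strip off the $j$-dependent normalization, reduce the claim to a uniform derivative bound for the dilated curvelet profile $\chi_j\circ D_j$ on a \emph{fixed} compact set, and then obtain that bound from a compactness argument once the parabolic scaling is unwound. First I would use (\ref{generator_fourier_transform}), namely $\widehat{\gam^{(j)}}(\xi)=\sqrt{|D_j|/(\Lambda_j\lambda_j)}\,\chi_j(D_j\xi)$, together with $\Lambda_j\lambda_j=\delt_1(j)\delt_2(j)|D_j|$, to write $\widehat{\gam^{(j)}}=(\delt_1(j)\delt_2(j))^{-1/2}\,(\chi_j\circ D_j)$. The prefactor is bounded above and below uniformly in $j$ by (\ref{correction_bounds}), so it suffices to bound $\|D^\alpha(\chi_j\circ D_j)\|_\infty$ uniformly for $j\ge 4$. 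By Lemma \ref{support}, $\chi_j\circ D_j$ is supported in the fixed compact set $Q:=\{\xi\mid |\xi_1|\ge A,\ |\xi|\le B\}$, which is the crucial structural point: the domain on which we work does not move with $j$.

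Next I would exploit the parabolic structure of $\chi_j$. Writing $R(\xi):=|D_j\xi|$ and $\Theta(\xi):=\arg(D_j\xi)$, one has $\chi_j(D_j\xi)=w(2^{-j}R)\bigl[\nu(2^{\lfloor j/2\rfloor}\Theta)+\nu(2^{\lfloor j/2\rfloor}(\Theta-\pi))\bigr]$. On the lobe $\xi_1>0$ only the first $\nu$ term survives (the second would force $\Theta$ within $2^{-\lfloor j/2\rfloor}\pi$ of $\pi$, which is incompatible with $\Theta=O(2^{-\lfloor j/2\rfloor})$ on the support), while the lobe $\xi_1<0$ is the image of the first under $\xi\mapsto-\xi$, i.e.\ $\chi_j\circ D_j$ is even, so all derivative bounds transfer; hence it is enough to treat $\xi_1>0$, where $\chi_j(D_j\xi)=w(2^{-j}R)\,\nu(2^{\lfloor j/2\rfloor}\Theta)$. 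The key computation is that the rescaled polar coordinates simplify to
$$2^{-j}R(\xi)=\sqrt{\xi_1^2+b_j^2\,\xi_2^2},\qquad 2^{\lfloor j/2\rfloor}\Theta(\xi)=\frac{c_j}{b_j}\arctan\!\Big(b_j\,\frac{\xi_2}{\xi_1}\Big),$$
with $b_j:=2^{\lfloor j/2\rfloor-j}$ and $c_j:=2^{2\lfloor j/2\rfloor-j}\in\{1/2,1\}$; for $j\ge 4$ one has $b_j\le 1/4$.

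The heart of the argument, and the step I expect to be the main obstacle, is to show that these two coordinate functions have all partial derivatives bounded uniformly in $j$ on $Q$. I would handle this by recognizing each as a fixed smooth function evaluated at a small parameter. The map $(\xi,b)\mapsto\sqrt{\xi_1^2+b^2\xi_2^2}$ is jointly smooth on $Q\times[0,1/4]$ precisely because $|\xi_1|\ge A$ keeps its argument bounded away from $0$; and $(b,s)\mapsto b^{-1}\arctan(bs)$ extends to a real-analytic function near $b=0$ (with value $s$ there), evaluated at $s=\xi_2/\xi_1$, which is smooth on $Q$ since $|\xi_1|\ge A$. Each of the finitely many $\xi$-derivatives of these two functions is therefore continuous on the compact set $Q\times[0,1/4]$, hence bounded; specializing $b=b_j$ and using $c_j\le 1$ yields the claimed $j$-uniform bounds on the derivatives of $\Psi_j:=(2^{-j}R,\,2^{\lfloor j/2\rfloor}\Theta)$.

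Finally I would close with Faà di Bruno. Since $\chi_j\circ D_j=(w\otimes\nu)\circ\Psi_j$ on the lobe $\xi_1>0$, with $w\otimes\nu$ a fixed compactly supported smooth function and $\Psi_j$ enjoying uniformly bounded derivatives of every order on $Q$, each $D^\alpha(\chi_j\circ D_j)$ is a universal polynomial in the derivatives of $\Psi_j$ whose coefficients are derivatives of $w\otimes\nu$; all of these are bounded uniformly in $j$. The evenness of $\chi_j\circ D_j$ extends the bound to $\xi_1<0$, and multiplying by the bounded normalization $(\delt_1(j)\delt_2(j))^{-1/2}$ gives $\|D^\alpha\widehat{\gam^{(j)}}\|_\infty\le C_\alpha$, as required.
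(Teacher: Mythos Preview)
Your proposal is correct and follows essentially the same approach as the paper: both reduce via (\ref{generator_fourier_transform}) and the uniform boundedness of the grid correction factors to controlling $\chi_j\circ D_j$, write this out as $w$ applied to $\sqrt{\xi_1^2+(2^{-\lceil j/2\rceil}\xi_2)^2}$ times $\nu$ applied to $2^{\lfloor j/2\rfloor}\arctan(2^{-\lceil j/2\rceil}\xi_2/\xi_1)$, and then use smoothness on the fixed compact set where $|\xi_1|\ge A>0$. Your version is more explicit than the paper's at the one nontrivial point---that the angular factor $2^{\lfloor j/2\rfloor}\arctan(b_j\,\xi_2/\xi_1)$ has $j$-uniformly bounded derivatives despite the growing prefactor---which you handle cleanly via the real-analytic extension of $(b,s)\mapsto b^{-1}\arctan(bs)=s\cdot g(bs)$ with $g(u)=\arctan(u)/u$ smooth on all of $\Rr$.
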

%
%
%
%
%
\begin{proof}
By (\ref{generator_fourier_transform}),
it suffices to consider $\chi_j(D_j\cdot)$ in place of $\gam^{(j)}$, since
$ \sqrt{\frac{|D_j|}{\Lambda_j \lambda_j}} =\sqrt{\delt_1(j) \delt_2(j)}$ 
is bounded above and below by Remark \ref{grid_correction_remark}.
We have that
$$
\chi_j (D_j\xi) 
= 
 w\left(\sqrt{\xi_1^2 + (2^{-\lceil j/2 \rceil} \xi_2)^2}) \right)
 v\bigl(2^{\lfloor j/2\rfloor} \tan^{-1} \bigl(2^{-\lceil j/2\rceil } {\xi_2}/{\xi_1}\bigr)\bigr).
$$
The result follows from the smoothness of the functions involved, 
and the fact that $|\xi_1|,|\xi_2|\le B$ and $|\xi_1|>A>0$.
\end{proof}
%

\bibliographystyle{plain}
\bibliography{EHR}
\end{document}